\documentclass[12pt]{article}
\usepackage{graphicx}
\usepackage{tikz}
\tikzstyle{vertex}=[circle, draw, fill=black, inner sep=0pt, minimum size=4pt]
\usepackage{amsmath,amsthm}
\usepackage{latexsym}
\newtheorem{theorem}{Theorem}%[section]
\newtheorem{corollary}[theorem]{Corollary}
\newtheorem{proposition}[theorem]{Proposition}

\newcommand{\match}{\mathcal{M}}
\theoremstyle{definition}
\newtheorem{definition}[theorem]{Definition}
\mathchardef\mhyphen="2D

\title{General polygonal line tilings and their matching complexes}
\author{Margaret Bayer\\Department of Mathematics\\University of Kansas\\Lawrence, Kansas, U.S.A.\\bayer@ku.edu \and Marija Jeli\'{c} Milutinovi\'{c}\\Faculty of Mathematics\\University of Belgrade\\ Belgrade, Serbia\\marijaj@matf.bg.ac.rs \and  Julianne Vega\thanks{This article is based on work supported by the National Science Foundation under Grant No.\ DMS-1440140 while the authors participated in the 2020/2021 Summer Research in Mathematics Program of the Mathematical Sciences Research Institute, Berkeley, California. M.\ Jeli\'{c} Milutinovi\'{c} has been supported by the Project No.\ 7744592 MEGIC
”Integrability and Extremal Problems in Mechanics, Geometry and
Combinatorics” of the Science Fund of Serbia, and by the Faculty of Mathematics University of Belgrade through the grant (No.\ 451-03-68/2022-14/200104) by the Ministry of
Education, Science, and Technological Development of the Republic of Serbia.}
\\Department of Mathematics\\Maret School\\Washington, D.C. U.S.A.\\jvega@maret.org}

\begin{document}
\maketitle
\begin{abstract}
A (general) polygonal line tiling is a graph formed by a string of cycles, each intersecting the previous at an edge, no three intersecting. In 2022, Matsushita proved the matching complex of a certain type of polygonal line tiling with even cycles is homotopy equivalent to a wedge of spheres. In this paper, we extend Matsushita's work to include a larger family of graphs and carry out a closer analysis of lines of triangle and pentagons, where the Fibonacci numbers arise.
\end{abstract}

\section{Introduction}
For a finite simple graph $G$, the matching complex $\match(G)$ is the
simplicial complex on the set of edges with faces given by matchings in the graph, where a matching 
is a set of edges no two of which share a vertex.  The topology of
matching complexes has been the subject of much research over the years.
Chessboard complexes, which are the matching complexes of complete bipartite 
graphs, have been studied by many authors, including Athanasiadis~\cite{Athan}, Bj\"{o}rner, et al.\ \cite{blvz}, Joji\'{c} \cite{jojic},
Shareshian and Wachs \cite{shareshian}, and Ziegler \cite{ziegler-chessboard}.
See Wachs \cite{wachs} for a survey.  Other matching complexes that have
been studied include those for 
paths and cycles (Kozlov \cite{kozlov-directed}) and
trees (Marietti and Testa \cite{MT_forests} and Jeli\'{c} Milutinovi\'{c} et al.\ \cite{JelicEtAl}). 
Most relevant to this paper is the study of matching complexes of 
grid graphs (Braun and Hough \cite{Braun_Hough} and 
Matsushita \cite{matsushita2018}),
polygonal line tilings (Matsushita \cite{takahiro}), and
honeycomb graphs (Jeli\'{c} Milutinovi\'{c} et al.\ \cite{JelicEtAl}). 
We are particularly interested in graphs whose matching complexes are contractible or homotopy equivalent to a wedge of spheres.  These are not all graphs; for example, in \cite{blvz} it is shown that
the matching complex of the complete bipartite graph $K_{3,4}$ is a torus.

Other papers take different approaches to the study of matching complexes. Bayer, et al. \cite{bayer} start with the topology of the matching complex and identify the graphs that produce it. The current authors \cite{bjv-perfect} define the {\em perfect matching complex}, the subcomplex of the matching complex with facets corresponding to perfect matchings, and study this complex for honeycomb graphs.

Note that other simplicial complexes associated
with graphs have been studied from a topological viewpoint.  
See, in particular,
Jonsson's book \cite{jonsson-book}.  We will see that 
tools developed for
the study of independence complexes of graphs 
by Adamaszek \cite{adamaszek-split} and Engstr\"{o}m \cite{engstrom} play an important role in our work.

In  this paper we will focus on graphs that are formed from lines of polygons.
This expands on the work of Matsushita \cite{takahiro}, who studied
matching complexes of the graphs formed by lines of $2n$-gons, 
intersecting at parallel edges.  
Our main result is that any line of polygons, allowing different size
polygons in the line (as long as each has at least four edges), has matching
complex that is contractible or homotopy equivalent to a wedge of spheres.
We also consider lines of triangles, where we can specify the dimensions
and numbers of spheres in the wedge.  In the case of pentagonal line tilings
we give the explicit homotopy type (involving Fibonacci numbers).

\section{Overview}
 We introduce the definitions and propositions that we use throughout the remainder of the article. Let $G$ be a finite simple graph. 
\begin{definition}
A {\em matching} of a graph $G$ is a set of edges of $G$, no two of
which share a vertex. The {\em matching complex} of a graph $G$ is the simplicial complex
$\match(G)$ with vertex set $E$, the set of edges of $G$, and faces
the subsets $\sigma\subseteq E$ that form matchings of $G$.
\end{definition}

\begin{definition}
An {\em independent set} of a graph $G$ is a set of vertices of $G$,
no two of which form an edge. The {\em independence complex} of a graph $G$ is the simplicial complex
${\cal I}(G)$ with vertex set $V$, the set of vertices of $G$, and
faces the subsets $\sigma\subseteq V$ that form independent sets of $G$.
\end{definition}

\begin{definition}
The {\em line graph} $L(G)$ of a graph $G$ is the graph with vertex set
the set of edges of $G$ and edge set the set of pairs of edges of $G$
that share a vertex.
\end{definition}

The following statement follows directly from the definitions.
\begin{proposition} \label{prop:math_ind}
The matching complex of $G$ is the independence complex of $L(G)$.
\end{proposition}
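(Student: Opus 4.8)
The plan is to show that $\match(G)$ and $\mathcal{I}(L(G))$ are not merely homotopy equivalent but literally the same simplicial complex, by checking that they have identical vertex sets and identical collections of faces. First I would observe that both complexes have the same vertex set: $\match(G)$ is defined on $E$, the edge set of $G$, and $L(G)$ is by definition the graph whose vertices are the elements of $E$, so $\mathcal{I}(L(G))$ is also a complex on vertex set $E$. Next I would fix an arbitrary subset $\sigma \subseteq E$ and compare the two membership conditions. Unwinding the definition of the line graph, two edges $e,f \in E$ are joined by an edge in $L(G)$ exactly when they share a vertex of $G$; consequently $\sigma$ is an independent set of $L(G)$ (no two of its elements span an edge of $L(G)$) if and only if no two edges in $\sigma$ share a vertex of $G$, which is precisely the definition of $\sigma$ being a matching of $G$. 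Hence $\sigma$ is a face of $\mathcal{I}(L(G))$ if and only if it is a face of $\match(G)$, and the two complexes coincide.

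Since the proposition is a direct translation between the defining conditions of the two complexes, I do not anticipate any genuine obstacle; the only points requiring a word of care are the degenerate faces. The empty set is a face of every simplicial complex, so it lies in both. Each singleton $\{e\}$ is trivially a matching of $G$, and a single vertex of $L(G)$ is trivially an independent set, so the singletons also match up. With the vertex sets and all faces in bijection under the identity map on $E$, we conclude $\match(G) = \mathcal{I}(L(G))$ on the nose.
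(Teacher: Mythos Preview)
Your proof is correct and is exactly what the paper intends: the authors state only that the proposition ``follows directly from the definitions'' and give no further argument, and your proposal is precisely that unwinding of the definitions of matching, independence set, and line graph.
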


 It is not true that every independence complex is a matching complex. 
 For example, consider the complex with facets $\{a,b,c\}$ and $\{d\}$.  This is the independence complex of $K_{1,3}$, but it is not the matching complex of any graph, since such a graph would have three independent edges and one edge that intersects all three of them.

Proposition~\ref{prop:math_ind} enables us to translate theorems about independence complexes to
theorems about matching complexes.  

Define the (open) edge neighborhood $EN_G(e)$ 
of an edge $e$ in the graph $G$ to be the 
set of edges adjacent to $e$, and 
the closed edge neighborhood of $e$ to be $EN_G[e] = EN_G(e) \cup \{e\}$. 
(When the graph $G$ is clear from context we write $EN(e)$ and $EN[e]$, respectively.)

For a simplex $\sigma$ in a simplicial complex $K$ the link of $\sigma$ is
\[ 
\textrm{lk}(\sigma, K) = \{ \tau \in K \mid \tau \cap \sigma = \emptyset, \tau \cup \sigma \in K\}  
\] 

and the (face) deletion of $\sigma \in K$ is 
\[
\textrm{del}(\sigma, K) = \{ \tau  \in K \mid \sigma \not\subset \tau\}. 
\]

For a graph $G$ and edge $e \in E(G)$, denote the corresponding vertex in $\match(G)$ as $\bar{e}$. Then 
$\textrm{lk}(\bar{e}, \match(G)) = \match(G \setminus EN_G[e])$. Since for a vertex $v$ of $K$ the sequence 
 ${\textrm{lk}(v, K) \rightarrow {\textrm{del}(v, K) \rightarrow K}}$ is a cofiber sequence (see \cite [Section 2]{adamaszek-split} for the definition of cofiber sequence and further details), we have the following result.

\begin{proposition}[Adamaszek \cite{adamaszek-split}, Proposition~3.1] \label{prop:cofiber}
The sequence
\[
\match(G \setminus EN[e]) \hookrightarrow \match(G \setminus \{e\}) \hookrightarrow \match(G)
\]
is a cofiber sequence. If the inclusion $\match(G \setminus EN[e]) \hookrightarrow \match(G \setminus \{e\})$ is null-homotopic, then there is a homotopy equivalence $\match(G) \simeq \match(G \setminus \{e\}) \vee \Sigma \match(G \setminus EN[e])$.
\end{proposition}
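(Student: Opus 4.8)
The statement has, in effect, already been reduced in the preceding paragraph to a standard fact about a single vertex of an abstract simplicial complex, together with a standard fact about null-homotopic maps; my plan is simply to assemble these.

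\emph{Step 1: the cofiber sequence for a vertex.} First I would recall the elementary decomposition valid for any vertex $v$ of a simplicial complex $K$, namely $K = \textrm{del}(v,K)\cup\textrm{star}(v,K)$, where the closed star $\textrm{star}(v,K)=v*\textrm{lk}(v,K)$ is a cone on the link and $\textrm{del}(v,K)\cap\textrm{star}(v,K)=\textrm{lk}(v,K)$. Thus $K$ is obtained from $\textrm{del}(v,K)$ by coning off the subcomplex $\textrm{lk}(v,K)$ along the inclusion $\textrm{lk}(v,K)\hookrightarrow\textrm{del}(v,K)$; that is, $K$ is (homeomorphic to) the mapping cone of that inclusion. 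Since inclusions of subcomplexes are cofibrations, this realizes $\textrm{lk}(v,K)\hookrightarrow\textrm{del}(v,K)\hookrightarrow K$ as a cofiber sequence, which is the general fact quoted from \cite{adamaszek-split}.

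\emph{Step 2: translate to matching complexes.} Taking $K=\match(G)$ and $v=\bar e$, I would identify the two subcomplexes in graph-theoretic terms. A face of $\match(G)$ avoiding $\bar e$ is exactly a matching of $G$ not using $e$, hence a matching of $G\setminus\{e\}$, so $\textrm{del}(\bar e,\match(G))=\match(G\setminus\{e\})$. And, as already observed in the text, $\sigma\cup\{e\}$ is a matching of $G$ iff $\sigma$ uses no edge of $EN_G[e]$, so $\textrm{lk}(\bar e,\match(G))=\match(G\setminus EN_G[e])$. Substituting these identifications into Step 1 yields the asserted cofiber sequence. For the splitting, I would invoke the standard fact that the homotopy type of a mapping cone depends only on the homotopy class of its attaching map; hence if $i\colon A\hookrightarrow X$ is null-homotopic, then $X\cup_i CA$ is homotopy equivalent to the mapping cone of a constant map, namely $X\cup_{\mathrm{const}}CA=X\vee(CA/A)=X\vee\Sigma A$. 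Applying this with $A=\match(G\setminus EN[e])$, $X=\match(G\setminus\{e\})$, and mapping cone $\match(G)$ gives $\match(G)\simeq\match(G\setminus\{e\})\vee\Sigma\match(G\setminus EN[e])$ whenever the inclusion is null-homotopic.

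There is no genuine obstacle here; the one thing to handle with care is the bookkeeping of basepoints, connectivity, and degenerate conventions, so that ``$\vee$'' and ``$\Sigma$'' are unambiguous. One should work with well-pointed spaces (automatic for nonempty subcomplexes of a simplicial complex) and read $\Sigma$ as the reduced suspension, and one should fix the convention for the matching complex of an edgeless graph — the $(-1)$-dimensional complex $\{\emptyset\}\simeq S^{-1}$, with $\Sigma S^{-1}=S^{0}$ — so that the formula remains correct in the base cases arising when $G$ has few edges. Everything beyond that is routine CW/simplicial homotopy theory built on the cited cofiber-sequence machinery.
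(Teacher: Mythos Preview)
Your proposal is correct and follows exactly the approach the paper points to: the paragraph preceding the proposition already records that $\textrm{lk}(\bar e,\match(G))=\match(G\setminus EN[e])$ and that $\textrm{lk}(v,K)\to\textrm{del}(v,K)\to K$ is a cofiber sequence, citing \cite{adamaszek-split} for the details you have spelled out. The paper does not give its own proof beyond this reduction, so your Steps~1--2 simply fill in the standard mapping-cone argument that the citation stands in for.
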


\begin{proposition}[Engstr\"om \cite{engstrom2}, Lemma 2.4]\label{prop:open_neighborhood}
Let $G$ be a graph that contains two different edges $e$ and $h$ such that $EN(e) \subset EN(h).$ Then $\match(G)$ collapses to  $\match(G \setminus\{h\})$. That is, $\match(G)\simeq \match(G \setminus\{h\})$.
\end{proposition}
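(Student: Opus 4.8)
The plan is to deduce the homotopy equivalence directly from Proposition~\ref{prop:cofiber}, applied with the edge $h$ in the role of ``$e$'' there; the only hypothesis to verify is that the inclusion $\match(G \setminus EN[h]) \hookrightarrow \match(G \setminus \{h\})$ is null-homotopic, and in fact I will show the source is contractible. First note that $e$ and $h$ do not share a vertex of $G$: otherwise $h \in EN_G(e) \subseteq EN_G(h)$, contradicting $h \notin EN_G(h)$. Hence $e$ is still an edge of $G' := G \setminus EN_G[h]$. Moreover every edge of $G$ meeting $e$ lies in $EN_G(e) \subseteq EN_G(h) \subseteq EN_G[h]$ and so is deleted in forming $G'$; thus $e$ meets no other edge of $G'$, i.e.\ $e$ is an isolated edge of $G'$. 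Consequently $\bar e$ lies in every maximal matching of $G'$, so $\match(G') = \match(G \setminus EN[h])$ is a cone with apex $\bar e$ and therefore contractible.

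Since every map out of a (nonempty) contractible space is null-homotopic, Proposition~\ref{prop:cofiber} then gives
\[
\match(G) \;\simeq\; \match(G \setminus \{h\}) \vee \Sigma\,\match(G \setminus EN[h]) \;\simeq\; \match(G \setminus \{h\}) \vee \Sigma(\mathrm{pt}) \;=\; \match(G \setminus \{h\}),
\]
which is the asserted homotopy equivalence. The one idea doing real work here is the observation that removing $EN[h]$ also removes the entire edge-neighborhood of $e$; beyond that there is no serious obstacle.

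To recover Engstr\"{o}m's stronger claim that $\match(G)$ actually \emph{collapses} onto $\match(G \setminus \{h\})$, I would instead produce an explicit acyclic matching. By Proposition~\ref{prop:math_ind} we may work in the line graph $H = L(G)$, with vertices $v, w$ corresponding to $e, h$; then $N_H(v) \subseteq N_H(w)$, and $\mathcal{I}(H \setminus \{w\}) = \match(G \setminus \{h\})$, while the faces of $\mathcal{I}(H)$ outside this subcomplex are exactly the independent sets of $H$ containing $w$. On those faces, match $\sigma$ with $\sigma \cup \{v\}$ when $v \notin \sigma$ and with $\sigma \setminus \{v\}$ when $v \in \sigma$. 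The containment $N_H(v) \subseteq N_H(w)$ is precisely what guarantees that $\sigma \cup \{v\}$ is again independent whenever $\sigma$ is independent and contains $w$ (because $\sigma \cap N_H(w) = \emptyset$ forces $\sigma \cap N_H(v) = \emptyset$), so this is a well-defined perfect matching of the faces outside $\mathcal{I}(H\setminus\{w\})$; and it is acyclic, since a matched up-step into a face $\tau$ forces $v \in \tau$, while the subsequent unmatched down-step cannot remove $v$ (the matched partner of $\tau$ is $\tau$ with $v$ removed), so the next up-step — which would have to start from a face not containing $v$ — is impossible and no alternating cycle can close up. Standard discrete Morse theory then converts this acyclic matching, whose critical faces form the subcomplex $\mathcal{I}(H\setminus\{w\})$, into a sequence of elementary collapses. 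That last bookkeeping step is the only delicate point of the second approach.
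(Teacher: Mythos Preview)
The paper does not prove this proposition at all; it is simply quoted from Engstr\"om \cite{engstrom2}, so there is no in-paper argument to compare against. Your proposal is correct on both counts.

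For the homotopy statement, your use of Proposition~\ref{prop:cofiber} with $h$ in the distinguished role is clean: the key observation that $e$ and $h$ cannot share a vertex (else $h\in EN(e)\subseteq EN(h)$) and hence $e$ survives as an isolated edge in $G\setminus EN[h]$ is exactly what makes $\match(G\setminus EN[h])$ a cone, so the splitting degenerates to $\match(G)\simeq\match(G\setminus\{h\})$.

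For the collapse, your discrete Morse matching $\sigma\mapsto\sigma\bigtriangleup\{v\}$ on the faces containing $w$ is the standard one and is indeed acyclic; the acyclicity argument can be said a touch more crisply: in any directed cycle every $\tau_i$ contains $v$ (it was reached by a matched up-step adding $v$) and every $\sigma_i$ omits $v$ (the next matched up-step adds $v$), but then the unmatched down-step $\tau_i\to\sigma_i$ must remove $v$, contradicting that $\tau_i\setminus\{v\}$ is the matched partner of $\tau_i$. The final appeal to discrete Morse theory for converting an acyclic matching whose critical cells form a subcomplex into a genuine simplicial collapse is legitimate; equivalently and perhaps more directly, one can note that $\mathrm{lk}(\bar h,\match(G))=\match(G\setminus EN[h])$ is a cone with apex $\bar e$, and a vertex whose link is collapsible can always be removed by elementary collapses.
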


\begin{proposition}[Adamaszek \cite{adamaszek-split}, Theorem~3.3]\label{prop:closed_neigh}
Let $G$ be a graph that contains two different edges $e$ and $h$ such that $EN[e] \subset EN[h].$ Then 
$$\match(G) \simeq \match(G  \setminus\{h \}) \vee \Sigma\match(G  \setminus EN[h]).$$
\end{proposition}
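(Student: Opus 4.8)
The plan is to derive this from Proposition~\ref{prop:cofiber} applied to the edge $h$. That proposition supplies the cofiber sequence
\[
\match(G \setminus EN[h]) \hookrightarrow \match(G \setminus \{h\}) \hookrightarrow \match(G),
\]
and tells us that $\match(G) \simeq \match(G \setminus \{h\}) \vee \Sigma\match(G \setminus EN[h])$ as soon as the first inclusion $\iota\colon \match(G \setminus EN[h]) \hookrightarrow \match(G \setminus \{h\})$ is null-homotopic. So the entire proof reduces to proving that $\iota$ is null-homotopic, and this is exactly where the hypothesis $EN[e] \subset EN[h]$ is used.

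The key step is to exhibit $\bar e$ as a cone apex. First note that since $e$ and $h$ are distinct and $e \in EN[e] \subseteq EN[h]$, the edges $e$ and $h$ are adjacent; in particular $e \ne h$, so $\bar e$ is a vertex of $\match(G\setminus\{h\})$, and $e \in EN[h]$, so $\bar e$ is \emph{not} a vertex of $\match(G\setminus EN[h])$. I would then check that $\{\bar e\} * \match(G \setminus EN[h])$ is a subcomplex of $\match(G \setminus \{h\})$: given any matching $\sigma$ of $G \setminus EN[h]$, every edge of $\sigma$ avoids $EN[h]$, hence avoids $EN(e) = EN[e]\setminus\{e\} \subseteq EN[h]$, so no edge of $\sigma$ is adjacent to $e$; since $\sigma$ is already a matching and contains no copy of $h$, the set $\sigma \cup \{e\}$ is a matching of $G \setminus \{h\}$. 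Thus $\iota$ factors as $\match(G\setminus EN[h]) \hookrightarrow \{\bar e\} * \match(G\setminus EN[h]) \hookrightarrow \match(G\setminus\{h\})$, and the first map is the inclusion of the base of a cone into the (contractible) cone, hence null-homotopic; therefore $\iota$ is null-homotopic. Feeding this into Proposition~\ref{prop:cofiber} yields the claimed wedge decomposition.

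The only delicate point — the part I would be most careful about — is bookkeeping with the two hypotheses: one needs $EN(e)\subseteq EN[h]$ to guarantee that $\bar e$ is independent from every face of $\match(G\setminus EN[h])$, and one needs $e\in EN[h]$ together with $e\neq h$ so that $\bar e$ is a genuine new apex living in $\match(G\setminus\{h\})$ but not in $\match(G\setminus EN[h])$. Both follow immediately from $e \neq h$ and $EN[e]\subseteq EN[h]$, so once the cone claim is set up correctly the rest is formal.
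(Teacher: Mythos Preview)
Your argument is correct. The paper itself does not prove this proposition; it simply quotes it as Theorem~3.3 of Adamaszek~\cite{adamaszek-split}. Your derivation from Proposition~\ref{prop:cofiber} (Adamaszek's Proposition~3.1) is exactly the intended route: once $EN[e]\subseteq EN[h]$ forces $\bar e$ to lie in $\match(G\setminus\{h\})$ and to cone off every face of $\match(G\setminus EN[h])$, the inclusion is null-homotopic and the wedge splitting follows. This is, in essence, Adamaszek's own proof translated into matching-complex language, so there is nothing to add.
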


An edge $e$ in $G$ is simplicial if $L(G[EN(e)])$ is a complete graph. 
That is, every pair of edges adjacent to $e$ are themselves adjacent.

\begin{proposition}[Engstr\"om \cite{engstrom2}, Lemma~2.5]\label{prop:simplicial_edge}
If $e$ is a simplicial edge in $G$, then there is a homotopy equivalence 
\[
\match(G) \simeq \bigvee\limits_{w \in EN(e)} \Sigma \match(G \setminus EN[w]).
\]
\end{proposition}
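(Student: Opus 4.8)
The plan is to induct on $k=|EN(e)|$, peeling off the neighbors $w_1,\dots,w_k$ of $e$ one at a time using the cofiber sequence of Proposition~\ref{prop:cofiber}. The only consequence of simpliciality I will need is that $e,w_1,\dots,w_k$ are pairwise adjacent; equivalently, $w_j\in EN[w_i]$ for all $i\neq j$, and $e\in EN[w_i]$ for every $i$. For the base case $k=0$ the edge $e$ is isolated, so $\bar{e}$ extends every matching and $\match(G)=\bar{e}\ast\match(G\setminus\{e\})$ is a cone, hence contractible; this matches the empty wedge (a point) on the right-hand side.

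For the inductive step I would apply Proposition~\ref{prop:cofiber} to the edge $w_1$, giving the cofiber sequence
\[
\match(G\setminus EN[w_1])\hookrightarrow\match(G\setminus\{w_1\})\hookrightarrow\match(G).
\]
The crux is that the first inclusion is null-homotopic, and this is where simpliciality is used. Since $e\sim w_1$ we have $e\in EN[w_1]$, so $e$ is an edge of $G\setminus\{w_1\}$ but not of $G\setminus EN[w_1]$; thus $\bar{e}$ is a vertex of $\match(G\setminus\{w_1\})$ but not of $\match(G\setminus EN[w_1])$. Now the edges adjacent to $e$ in $G\setminus\{w_1\}$ are exactly $w_2,\dots,w_k$, all of which lie in $EN[w_1]$ and are therefore absent from $G\setminus EN[w_1]$; hence $\sigma\cup\{e\}$ is a matching of $G\setminus\{w_1\}$ for every matching $\sigma$ of $G\setminus EN[w_1]$. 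This shows $\bar{e}\ast\match(G\setminus EN[w_1])$ is a subcomplex of $\match(G\setminus\{w_1\})$, i.e.\ a cone on $\match(G\setminus EN[w_1])$; so the inclusion factors through a contractible complex and is null-homotopic. Proposition~\ref{prop:cofiber} then yields $\match(G)\simeq\match(G\setminus\{w_1\})\vee\Sigma\match(G\setminus EN[w_1])$.

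To finish, I would note that $e$ is still simplicial in $G':=G\setminus\{w_1\}$, with $EN_{G'}(e)=\{w_2,\dots,w_k\}$ of size $k-1$, so by the inductive hypothesis $\match(G')\simeq\bigvee_{i=2}^{k}\Sigma\match(G'\setminus EN_{G'}[w_i])$. Since $w_1\in EN_G[w_i]$ for $i\geq2$, removing from $G$ both $w_1$ and $EN_{G'}[w_i]=EN_G[w_i]\setminus\{w_1\}$ removes precisely $EN_G[w_i]$, so $G'\setminus EN_{G'}[w_i]=G\setminus EN_G[w_i]$. Substituting into the previous equivalence and using that $\vee$ is associative up to homotopy gives $\match(G)\simeq\bigvee_{i=1}^{k}\Sigma\match(G\setminus EN[w_i])$, as claimed. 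The main obstacle is the null-homotopy claim, and the plan reduces it to the single observation that the remaining neighbors $w_2,\dots,w_k$ of $e$ are absorbed into $EN[w_1]$ — which is exactly what simpliciality provides — so no deeper input is needed.
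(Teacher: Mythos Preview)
Your argument is correct. Note, however, that the paper does not supply its own proof of this proposition; it is quoted from Engstr\"om \cite{engstrom2} and stated without proof, so there is no paper proof to compare against.

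Assessing your proof on its own merits: the induction on $k=|EN(e)|$ works exactly as you describe. The key null-homotopy step is valid because simpliciality gives $EN_G[e]=\{e,w_1,\dots,w_k\}\subseteq EN_G[w_1]$ (each $w_i$ is adjacent to $w_1$, and $e$ is adjacent to $w_1$), so every matching of $G\setminus EN[w_1]$ avoids all neighbors of $e$ and can be extended by $e$ inside $G\setminus\{w_1\}$; the inclusion therefore cones off through $\bar e$. The identification $G'\setminus EN_{G'}[w_i]=G\setminus EN_G[w_i]$ is also correct since $w_1\in EN_G[w_i]$.

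One streamlining: because $EN[e]\subseteq EN[w_1]$, you may bypass the explicit cofiber/null-homotopy discussion and invoke Proposition~\ref{prop:closed_neigh} directly to obtain $\match(G)\simeq\match(G\setminus\{w_1\})\vee\Sigma\match(G\setminus EN[w_1])$, then run the same induction. This is the one-line route and is presumably closer to how Engstr\"om's original lemma is proved, but your version via Proposition~\ref{prop:cofiber} is equally valid and in fact re-derives the relevant case of Proposition~\ref{prop:closed_neigh} along the way.
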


\begin{corollary}\label{path_length_3} If $G$ is a graph, and if $P$ is a path of length 3 that intersects $G$ at just one endpoint, then $\match(G\cup P) \simeq \Sigma\match(G)$.
\end{corollary}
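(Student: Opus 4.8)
The plan is to apply Proposition~\ref{prop:simplicial_edge} to the edge of $P$ that is farthest from $G$. Write the path as $P = v_0 v_1 v_2 v_3$ with edges $e_1 = \{v_0,v_1\}$, $e_2 = \{v_1,v_2\}$, and $e_3 = \{v_2,v_3\}$, where $v_0 \in V(G)$ is the shared endpoint and $v_1, v_2, v_3 \notin V(G)$. Set $H := G \cup P$.

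First I would compute the relevant edge neighborhoods in $H$. Since $v_3$ has degree $1$ and $v_2$ has degree $2$ in $H$ (its only incident edges being $e_2$ and $e_3$), the only edge adjacent to $e_3$ is $e_2$; that is, $EN_H(e_3) = \{e_2\}$. Because at most one edge is adjacent to $e_3$, the condition "every pair of edges adjacent to $e_3$ is itself adjacent" holds vacuously, so $e_3$ is a simplicial edge of $H$. Likewise, using that $v_1$ and $v_2$ have no neighbors outside $P$, one gets $EN_H[e_2] = \{e_1, e_2, e_3\}$.

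Next I would invoke Proposition~\ref{prop:simplicial_edge} with $e = e_3$. Since the index set $EN_H(e_3) = \{e_2\}$ is a singleton, the wedge on the right-hand side collapses to the single term $\Sigma \match(H \setminus EN_H[e_2])$, so $\match(H) \simeq \Sigma \match(H \setminus EN_H[e_2])$. It then remains to identify $H \setminus EN_H[e_2]$: deleting $e_1, e_2, e_3$ from $H$ removes exactly the three edges of $P$ — crucially, none of these is an edge of $G$, since $P$ meets $G$ only in the single vertex $v_0$ — leaving $G$ together with the now-isolated vertices $v_1, v_2, v_3$. Isolated vertices contribute no edges, hence no faces, so $\match(H \setminus EN_H[e_2]) = \match(G)$, and the corollary follows.

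The only step requiring genuine care is the last one, and it is exactly where the hypothesis that $P$ "intersects $G$ at just one endpoint" is used: if $P$ shared additional vertices or an edge with $G$, the neighborhoods $EN_H(e_3)$ and $EN_H[e_2]$ could be larger, $e_3$ might fail to be simplicial, and the deletion could strip edges from $G$ itself. (As an alternative route, one could instead use Proposition~\ref{prop:closed_neigh} via the containment $EN_H[e_3] = \{e_2,e_3\} \subset \{e_1,e_2,e_3\} = EN_H[e_2]$, and then observe that $H \setminus \{e_2\}$ has the isolated edge $e_3$ as a full connected component, which makes its matching complex a cone, hence contractible; this again yields $\match(H) \simeq \Sigma\match(G)$.)
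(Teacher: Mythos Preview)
Your proof is correct and follows the approach the paper intends: the corollary is stated immediately after Proposition~\ref{prop:simplicial_edge} with no explicit proof, so applying that proposition to the pendant edge $e_3$ (whose single neighbor is $e_2$) is exactly the intended one-line argument. Your identification of $H\setminus EN_H[e_2]$ with $G$ up to isolated vertices is the only point needing care, and you handle it correctly.
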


\begin{proposition}[Engstr\"om \cite{engstrom-KM}, Lemma~2.2]
\label{prop:shorten_path}
If $G$ is a graph with 
a path $X$ of length 4 whose internal vertices are of degree two and whose end vertices are distinct, 
then $\match(G) \simeq \Sigma \match(G/ X)$, where $G / X$ is the contraction of $X$ to a single edge with endpoints given by the endpoints of $X$.
\end{proposition}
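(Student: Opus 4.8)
\medskip
\noindent\emph{Proof sketch.}\ The plan is to induct on $|E(G)|-4$, the number of edges of $G$ not on $X$. Write $X$ as $v_0\,e_1\,v_1\,e_2\,v_2\,e_3\,v_3\,e_4\,v_4$, so that $e_i=v_{i-1}v_i$, the internal vertices $v_1,v_2,v_3$ have degree $2$ in $G$, and $v_0\ne v_4$; let $f=v_0v_4$ be the edge of $G/X$ that replaces the path. In the base case $E(G)=E(X)$, the graph $G$ is $P_5$ together with isolated vertices, and $X$ itself exhibits $G$ as the single edge $v_3v_4$ with a pendant path of length $3$ (namely $v_0\,e_1\,v_1\,e_2\,v_2\,e_3\,v_3$) attached at $v_3$; so Corollary~\ref{path_length_3} gives $\match(G)\simeq\Sigma\match(K_2)=\Sigma\match(G/X)$.

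For the inductive step, pick an edge $h\in E(G)\setminus E(X)$. Because $v_1,v_2,v_3$ have degree $2$, $h$ is incident to none of them, and $h$ is still an edge of $G/X$. Apply the cofiber sequence of Proposition~\ref{prop:cofiber} to $h$ in both $G$ and $G/X$:
\[
\begin{aligned}
&\match(G\setminus EN_G[h])\xrightarrow{\ j\ }\match(G\setminus h)\hookrightarrow\match(G),\\
&\match(G/X\setminus EN_{G/X}[h])\xrightarrow{\ j'\ }\match(G/X\setminus h)\hookrightarrow\match(G/X);
\end{aligned}
\]
thus $\match(G)\simeq\operatorname{hocofib}(j)$ and $\match(G/X)\simeq\operatorname{hocofib}(j')$. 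Now $G\setminus h$ still contains $X$ with all the degree hypotheses and $(G\setminus h)/X=(G/X)\setminus h$, so the inductive hypothesis gives $\match(G\setminus h)\simeq\Sigma\match((G/X)\setminus h)$. For the source of $j$ there are two cases. If $h$ is incident to neither $v_0$ nor $v_4$, then $EN_G[h]$ contains none of the $e_i$, so $G\setminus EN_G[h]$ again contains $X$ with $(G\setminus EN_G[h])/X=(G/X)\setminus EN_{G/X}[h]$, and induction gives $\match(G\setminus EN_G[h])\simeq\Sigma\match((G/X)\setminus EN_{G/X}[h])$. If instead $h=v_0w$ with $w\notin\{v_0,\dots,v_4\}$ (an edge at $v_4$ is symmetric), then $e_1\in E_{v_0}\subseteq EN_G[h]$, so in $G\setminus EN_G[h]$ both $v_0$ and $w$ are isolated, the edges $e_2,e_3,e_4$ survive with $v_2,v_3$ still of degree $2$, and $v_1\,e_2\,v_2\,e_3\,v_3\,e_4\,v_4$ is a pendant path of length $3$ attached at $v_4$; Corollary~\ref{path_length_3} then gives $\match(G\setminus EN_G[h])\simeq\Sigma\match\bigl(G\setminus EN_G[h]-\{v_1,v_2,v_3\}\bigr)$, and one checks directly that $G\setminus EN_G[h]-\{v_1,v_2,v_3\}$ agrees, up to isolated vertices, with $(G/X)\setminus EN_{G/X}[h]$. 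In all cases the two left-hand terms of the $G$-sequence are the suspensions of the two left-hand terms of the $G/X$-sequence.

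The crux is then to check that, under the identifications just made, the map $j$ is homotopic to $\Sigma j'$; granting this, $\match(G)\simeq\operatorname{hocofib}(\Sigma j')\simeq\Sigma\operatorname{hocofib}(j')\simeq\Sigma\match(G/X)$, and the induction closes. This compatibility is where the real work lies: one must know that the equivalences supplied by the inductive hypothesis and by Corollary~\ref{path_length_3} are natural with respect to edge-deletion inclusions, so that the square relating $j$ and $\Sigma j'$ commutes up to homotopy. I would either arrange the induction so that the equivalence $\match(H)\simeq\Sigma\match(H/X)$ is, by construction, the canonical composite coming from the cofiber sequences, and then chase the square through those canonical maps; or---cleaner, and close in spirit to Engstr\"om's original argument---sidestep the coherence bookkeeping entirely by building a discrete Morse matching directly on the face poset of $\match(G)$ that pairs cells off using $e_1,e_2,e_3,e_4$ in turn, leaving the critical cells in a dimension-shifted bijection with the faces of $\match(G/X)$. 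The degenerate situations (for instance $v_0$ already adjacent to $v_4$ in $G$, where $G/X$ must be read as a simple graph, or $h$ incident to both $v_0$ and $v_4$) are treated by the same methods. The individual homotopy equivalences above are routine consequences of the propositions already available; the one real obstacle is this naturality/coherence step.
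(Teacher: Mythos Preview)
The paper does not prove this proposition; it is quoted from Engstr\"om with a citation and no argument, so there is no ``paper's own proof'' to compare against.  What follows is an assessment of your sketch on its own merits.

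The outline is reasonable up to the point you yourself flag, and that point is a genuine gap, not mere bookkeeping.  From the inductive hypothesis you obtain \emph{abstract} homotopy equivalences
\[
\match(G\setminus EN_G[h])\simeq \Sigma\,\match\bigl((G/X)\setminus EN_{G/X}[h]\bigr),
\qquad
\match(G\setminus h)\simeq \Sigma\,\match\bigl((G/X)\setminus h\bigr),
\]
but nothing in the hypothesis tells you that these particular equivalences intertwine $j$ with $\Sigma j'$.  Homotopy cofibres are not determined by the homotopy types of source and target alone: two maps $A\to B$ with $A,B$ fixed up to equivalence can have non-equivalent cofibres.  Concretely, even if both rows of
\[
\begin{array}{ccc}
\match(G\setminus EN_G[h]) & \xrightarrow{\ j\ } & \match(G\setminus h)\\[2pt]
\big\downarrow\simeq & & \big\downarrow\simeq\\[2pt]
\Sigma\,\match\bigl((G/X)\setminus EN_{G/X}[h]\bigr) & \xrightarrow{\ \Sigma j'\ } & \Sigma\,\match\bigl((G/X)\setminus h\bigr)
\end{array}
\]
consist of cofibrations and the vertical arrows are equivalences, you still need the square to commute up to homotopy before you can compare the cofibres, and the inductive hypothesis as stated gives you no handle on that.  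Strengthening the induction to carry a \emph{specific, natural} equivalence is not a formality: you must produce an explicit map (say a simplicial or cellular one) $\match(G)\to\Sigma\,\match(G/X)$ and prove it is natural for deletions of edges off $X$, including the degenerate cases; Corollary~\ref{path_length_3} and Proposition~\ref{prop:cofiber}, as stated, only give equivalences, not canonical maps with the required functoriality.

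Your second suggestion---build a discrete Morse matching on $\match(G)$ using $e_1,e_2,e_3,e_4$---is essentially Engstr\"om's own route and would work, but it is a different argument from the one you sketched and you have not carried it out.  As it stands, the sketch reduces the proposition to exactly the hard step and then defers that step; it is not yet a proof.
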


The resulting contraction may have parallel edges. The following proposition explains the homotopy type of the matching complex in those situations. 
\begin{proposition}
\label{double_edge}
Let $G$ be a graph and $e$ an arbitrary edge in $G$. Consider a graph $G \cup \{x\}$ obtained by adding an edge $x$ parallel to $e$ ($x$ and $e$ have same endpoints). Then:
$$\match(G \cup \{x\}) \simeq \match(G) \vee \Sigma \match(G \setminus EN_{G}[e]).$$ 
\end{proposition}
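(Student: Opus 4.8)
The plan is to apply the cofiber sequence of Proposition~\ref{prop:cofiber} to the graph $H := G \cup \{x\}$, taking the distinguished edge to be $x$. First I would record the incidence data: since $x$ is parallel to $e$, the edges adjacent to $x$ in $H$ are $e$ together with every edge of $G$ meeting $e$, so $EN_H[x] = EN_G[e] \cup \{x\}$. Hence $H \setminus \{x\} = G$ and $H \setminus EN_H[x] = G \setminus EN_G[e]$, and Proposition~\ref{prop:cofiber} yields the cofiber sequence
\[
\match(G \setminus EN_G[e]) \hookrightarrow \match(G) \hookrightarrow \match(G \cup \{x\}).
\]
By that same proposition, it then suffices to show the left-hand inclusion is null-homotopic, for then $\match(G\cup\{x\}) \simeq \match(G) \vee \Sigma\match(G\setminus EN_G[e])$, which is exactly the assertion.

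The key step is to identify this first map with a link inclusion. Using the identity $\textrm{lk}(\bar e,\match(G)) = \match(G \setminus EN_G[e])$ noted in the excerpt, the subcomplex $\match(G \setminus EN_G[e])$ of $\match(G)$ consists precisely of the faces $\tau$ with $\bar e \notin \tau$ and $\tau \cup \{\bar e\} \in \match(G)$, i.e.\ the link of $\bar e$. One then checks that the inclusion produced by Proposition~\ref{prop:cofiber} is literally this inclusion $\textrm{lk}(\bar e,\match(G)) \hookrightarrow \match(G)$. Since it factors as $\textrm{lk}(\bar e,\match(G)) \hookrightarrow \textrm{star}(\bar e,\match(G)) \hookrightarrow \match(G)$, where the star (the subcomplex of faces $\tau$ with $\tau \cup \{\bar e\} \in \match(G)$) is a cone with apex $\bar e$ and hence contractible, the composite is null-homotopic, and the argument is complete.

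I expect the only real care needed to be bookkeeping. Because $H = G \cup \{x\}$ is a multigraph, one should confirm that $\match$ and the operations $\setminus\{x\}$ and $\setminus EN_H[x]$ behave as in the simple case — they do, the point being that $x$ and $e$ share both endpoints, so no matching contains both, and $EN_H[x]$ meets $\{x\}$. The second, slightly more delicate point is verifying that the inclusion in the cofiber sequence is the link inclusion on the nose (rather than merely homotopic to it), which is what lets us read off null-homotopy from the cone structure of the star. Neither point is deep. An alternative, essentially equivalent route is to observe that $EN_H[e] = EN_H[x]$ and appeal to Proposition~\ref{prop:closed_neigh} in the boundary case of equal closed edge neighborhoods; I would nonetheless present the direct cofiber-sequence argument, since it relies only on results stated in the excerpt and makes the null-homotopy transparent.
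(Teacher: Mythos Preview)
Your argument is correct. The paper takes precisely the alternative route you mention at the end: it simply notes that $EN_{G\cup\{x\}}[e]=EN_{G\cup\{x\}}[x]$ and invokes Proposition~\ref{prop:closed_neigh} (the Adamaszek splitting) directly, then identifies $(G\cup\{x\})\setminus EN_{G\cup\{x\}}[x]$ with $G\setminus EN_G[e]$. Your approach instead unwinds one layer, applying Proposition~\ref{prop:cofiber} to the vertex $\bar x$ and exhibiting the null-homotopy by hand via the factorization $\mathrm{lk}(\bar e,\match(G))\hookrightarrow\mathrm{star}(\bar e,\match(G))\hookrightarrow\match(G)$. The paper's version is a one-line citation of a stronger black box; yours is slightly longer but more self-contained, relying only on the cofiber sequence and the elementary fact that a link includes into a cone. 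Since Proposition~\ref{prop:closed_neigh} is itself proved by exactly this kind of link-through-star null-homotopy, the two arguments are really the same under the hood.
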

\begin{proof}
Observe that $EN_{G \cup \{x\}}[e] = EN_{G \cup \{x\}}[x],$ so by Proposition~\ref{prop:closed_neigh} we have
$$\match(G \cup \{x\}) \simeq \match(G) \vee \Sigma \match((G \cup \{x\}) \setminus EN_{G \cup \{x\}}[x]).$$ 
Then we have $(G \cup \{x\}) \setminus EN_{G \cup \{x\}} [x] = G \setminus EN_{G}[e],$ and the result follows.
 \end{proof}

\section{General polygonal line tilings}

A (general) polygonal line tiling is a graph formed by a string of cycles, 
each intersecting the previous at an edge, no three intersecting.  
To maintain the last property, we assume all the cycles are of 
length at least four.
We want to prove that the matching complex of such a graph is contractible
or homotopy equivalent to a wedge of spheres.  
Our methods will require that we expand the class of graphs slightly by
allowing two paths attached to adjacent vertices of the final cycle in the
string.  Here is the formal definition.

\begin{definition}
Let $n$ be a positive integer, $k$ and $\ell$ nonnegative integers, and
$s_1, s_2, \ldots, s_n$ be a sequence of integers satisfying 
$s_j\ge 4$ for all $j$. 
Let ${\cal G}^{k,\ell}_{s_1,s_2,\ldots, s_n}$ be the set of graphs 
obtained as follows.
\begin{itemize}
\item For each $i$,  $1\le i\le n-1$, $C_i$ is an $s_i$-cycle containing two disjoint 
      edges $a_{i-1}b_{i-1}$ and $c_id_i$. $C_n$ is an $s_n$-cycle containing two
      disjoint edges $a_{n-1}b_{n-1}$ and $a_nb_n$. 
\item $T$ is a length $k$ path  on vertices $t_0,t_1,\ldots, t_k$ (if $k\ge 1$) and 
      $U$ is a length $\ell$ path on vertices $u_0,u_1,\ldots, u_\ell$ (if $\ell\ge 1$).
\item The following pairs of vertices are identified: $a_i=c_i$ and $b_i=d_i$
      for all $i$, $1\le i\le n-1$, and $a_n=t_0$, $b_n=u_0$ (if the latter
      vertices exist).
\end{itemize}
Any graph in 
${\cal G}^{k,\ell}_{s_1,s_2,\ldots, s_n}$ is called an {\em (extended) 
polygonal line tiling}. See Figure~\ref{extended_polygonal}.
\end{definition}

Note: This set can contain different graphs with the same $s_i$, $k$ and $\ell$.  This is not important for our arguments. 

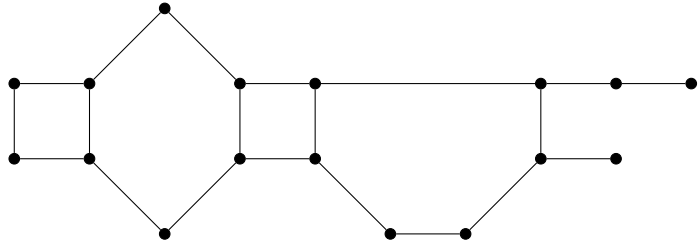
\begin{figure}[h]
\begin{center}
\begin{tikzpicture}
\node[vertex] (a) at (0,2) {};
\node[vertex] (b) at (1,2) {};
\node[vertex] (c) at (2,3) {};
\node[vertex] (d) at (3,2) {};
\node[vertex] (e) at (4,2) {};
\node[vertex] (f) at (7,2) {};
\node[vertex] (g) at (8,2) {};
\node[vertex] (h) at (9,2) {};
\node[vertex] (i) at (8,1) {};
\node[vertex] (j) at (7,1) {};
\node[vertex] (k) at (6,0) {};
\node[vertex] (l) at (5,0) {};
\node[vertex] (m) at (4,1) {};
\node[vertex] (n) at (3,1) {};
\node[vertex] (o) at (2,0) {};
\node[vertex] (p) at (1,1) {};
\node[vertex] (q) at (0,1) {};
\draw (a)--(b)--(c)--(d)--(e)--(f)--(g)--(h);
\draw (i)--(j)--(k)--(l)--(m)--(n)--(o)--(p)--(q);
\draw (a)--(q);
\draw (b)--(p);
\draw (d)--(n);
\draw (e)--(m);
\draw (f)--(j);
\end{tikzpicture}
\caption{Example of extended polygonal line tiling
         in ${\cal G}_{4,6,4,6}^{2,1}$} \label{extended_polygonal}
\end{center}
\end{figure}

\begin{theorem}
If $G$ is any graph in ${\cal G}^{k,\ell}_{s_1,s_2,\ldots, s_n}$, then 
the matching complex of $G$ is contractible or homotopy equivalent to a 
wedge of spheres.
\end{theorem}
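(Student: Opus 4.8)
\emph{Proof plan.} The plan is to induct on the number of edges of $G$, peeling the graph apart with the reduction lemmas of the Overview. Two generalities will be used repeatedly. First, for a disjoint union $\match(G_1\sqcup G_2)=\match(G_1)*\match(G_2)$, and the class of spaces that are contractible or homotopy equivalent to a wedge of spheres is closed under join and under suspension; so it suffices to treat connected graphs and we may discard any component with contractible matching complex. Second, the known homotopy types of matching complexes of paths and cycles (Kozlov~\cite{kozlov-directed}), namely that $\match(P_m)$ and $\match(C_m)$ are each contractible or a wedge of at most two spheres, serve as base cases, together with finitely many small graphs handled by direct application of the lemmas.

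The inductive step consists of three kinds of moves. \textbf{Shortening pendant paths:} whenever $G$ has a pendant path of length at least $3$ (in particular when $k\ge 3$ or $\ell\ge 3$), Corollary~\ref{path_length_3} gives $\match(G)\simeq\Sigma\match(G')$ for a strictly smaller $G'$ of the same shape; a pendant path of length $1$ or $2$ is absorbed using Proposition~\ref{prop:simplicial_edge} (a leaf edge is always simplicial) or Proposition~\ref{prop:open_neighborhood}. \textbf{Shrinking cycles:} if some cycle $C_i$ is large enough to contain a sub-path of length $4$ with degree-two interior vertices and distinct endpoints, Proposition~\ref{prop:shorten_path} contracts it and replaces $\match(G)$ by $\Sigma\match(G')$ with $s_i$ decreased by $3$; a contraction that creates parallel edges is handled by Proposition~\ref{double_edge}, which splits $\match(G)$ as a wedge of two pieces, each smaller and of the required shape by induction. \textbf{Reducing the number of cycles:} the first cycle $C_1$ carries a free ``cap'' edge $a_0b_0$ whose two endpoints have degree two; after shrinking $C_1$ down to a $4$-cycle one checks $EN_G(a_0b_0)\subseteq EN_G(a_1b_1)$, where $a_1b_1$ is the edge shared with $C_2$, so Proposition~\ref{prop:open_neighborhood} yields $\match(G)\simeq\match(G\setminus\{a_1b_1\})$, and deleting $a_1b_1$ fuses $C_1$ and $C_2$ into a single $(s_2+2)$-cycle, lowering $n$ by one. (When $s_1\not\equiv 1\pmod 3$ the shrinking leaves $C_1$ a triangle or a doubled edge rather than a $4$-cycle; an analogous but slightly longer argument using Propositions~\ref{prop:closed_neigh} and~\ref{double_edge} accomplishes the same reduction.) Iterating these moves drives $n$ down to $1$, where the base cases apply.

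One genuine subtlety is that the graphs produced by these moves do not quite remain inside the family ${\cal G}^{k,\ell}_{s_1,\dots,s_n}$ as literally defined: deleting the shared edge of $C_1$ and $C_2$, or its degenerate variants, can leave pendant paths attached at a pair of adjacent vertices of the \emph{new} first cycle in addition to those at the last cycle. I would therefore run the induction over the slightly larger class consisting of polygonal line tilings of cycles $C_1,\dots,C_n$ carrying at most two pendant paths at adjacent vertices of $C_1$ and at most two at adjacent vertices of $C_n$ (with $n=1$ allowed, so that up to four pendant paths may be present on a single cycle), together with their disjoint unions with paths. All three moves above stay within this class, and its only base cases are still $\match(C_m)$, $\match(P_m)$, and a finite list of small graphs.

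The main obstacle is the bookkeeping inside this enlarged induction. In each configuration one has to verify that the hypotheses of the quoted lemmas genuinely hold — in particular the edge-neighborhood containments needed for Propositions~\ref{prop:open_neighborhood} and~\ref{prop:closed_neigh}, and, if one prefers to invoke the cofiber sequence Proposition~\ref{prop:cofiber} directly rather than routing through those, the null-homotopy of the relevant inclusion (which will hold because one of the two subcomplexes is contractible in the situations that occur) — and one must track the finitely many degenerate graphs that arise when a cycle shrinks below length $4$ or collapses to a doubled edge. Individually none of these steps is difficult; organizing the case analysis so that every intermediate graph falls under the inductive hypothesis is the real content of the proof.
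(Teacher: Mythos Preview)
Your proposal is correct in outline and deploys the same toolkit as the paper, but one organizational choice makes your life harder than necessary. You peel cycles from the $C_1$ end, and this is exactly why the pendant paths produced by the $G\setminus EN[h]$ terms land at the opposite end from the existing tails $T,U$, forcing you into the enlarged class with tails at both ends. The paper instead works entirely at the $C_n$ end. For the tails it uses Proposition~\ref{prop:closed_neigh} rather than the simplicial-edge lemma: when $k=1$ take $e=a_nt_1$, $h=a_nb_n$; when $k\ge 2$ take $e=t_{k-1}t_k$, $h=t_{k-2}t_{k-1}$. In each case $EN[e]\subset EN[h]$, and both $G\setminus\{h\}$ and $G\setminus EN[h]$ land in some ${\cal G}^{k',\ell'}_{s_1,\dots,s_m}$ with $m\le n$ (possibly disjoint union with a $P_2$, which kills that term). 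Once $k=\ell=0$ the paper shrinks and removes $C_n$ itself, treating $s_n\in\{4,5,6\}$ separately and reducing $s_n\ge 7$ to these by repeated application of Proposition~\ref{prop:shorten_path}; the remnants of $C_n$ and of the opened $C_{n-1}$ become the new tails at the new last cycle, so again everything stays inside the original family. The double induction on $n$ and on $\max(k,\ell)$ therefore closes with no enlargement at all. Your enlarged class would also work, but its $n=1$ base case---a single cycle carrying up to four pendant paths at two pairs of adjacent vertices---is itself an infinite family needing its own inductive argument, so the detour buys nothing.
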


\begin{proof}
The proof is by induction on $n$, with an internal induction on $k$ and~$\ell$.

Base case.  Suppose $n=1$.  By symmetry, we can assume $k\ge \ell$.
If $k=\ell=0$, then 
${\cal G}^{0,0}_{s_1}$ is simply the $s_1$-cycle, which is known to 
have matching complex homotopy equivalent to a sphere or a wedge of two
spheres (Kozlov, \cite{kozlov-directed}).

If $k=1$, $0\le \ell\le 1$, 
let $e=t_0t_1=a_1t_1$ and let $h$ be the edge of the
$s_1$-cycle, $h=a_1b_1$.  Thus $EN[e]\subset EN[h]$, and  
by Proposition~\ref{prop:closed_neigh},
\begin{eqnarray*}
\match(G) &\simeq& \match(G  \setminus\{h \}) \vee 
\Sigma\match(G \setminus EN[h]) \\
&=& \match(P_{s_1+\ell+1}) \vee \Sigma\match(P_{s_1-2})
\end{eqnarray*}
where $P_n$ denotes a path on $n$ vertices.

It is known that the matching complex of a path is contractible
or homotopy equivalent to a sphere \cite{kozlov-directed}, so in this case
$\match(G)$ is contractible or homotopy equivalent to a wedge of spheres.

If $k=2$, $0\le \ell\le 2$, let $e=t_1t_2$ and let $h=t_0t_1=a_1t_1$.
Thus $EN[e]\subset EN[h]$, and  
by Proposition~\ref{prop:closed_neigh}
\begin{eqnarray*}
\match(G) &\simeq& \match(G  \setminus\{h \}) \vee 
\Sigma\match(G \setminus EN[h]) \\
&=& \match(H\sqcup P_2)) \vee \Sigma\match(P_{s_1-1+\ell})
\end{eqnarray*}
where $H$ is the $s_1$-cycle with an $\ell$-path attached to the cycle
at an endpoint.  Since $\match(P_2)$ is a single vertex, and the 
matching complex of a disjoint union is the join of the matching
complexes of the components,  $\match(H\sqcup P_2)$ is contractible.
So $\match(G)$ is contractible or homotopy equivalent to a wedge of spheres.

Now, inductively, assume that if both $k$ and $\ell$ are at most $m\ge 2$, then
the matching complex of 
$G\in{\cal G}^{k,\ell}_{s_1}$ is homotopy equivalent to
a wedge of spheres.
Let $\ell\le k = m+1$ and let $G\in{\cal G}^{m+1,\ell}_{s_1}$.  
Let $e=t_mt_{m+1}$ and $h=t_{m-1}t_m$.  
Thus $EN[e]\subset EN[h]$, and  
by Proposition~\ref{prop:closed_neigh},
\begin{eqnarray*}
\match(G) &\simeq& \match(G  \setminus\{h \}) \vee 
\Sigma\match(G \setminus EN[h]) \\
&=& \match(H\sqcup P_2)) \vee \Sigma\match(J)
\end{eqnarray*}
where $H$ is the disjoint union of a graph in 
${\cal G}^{m-1,\ell}_{s_1}$ 
and $P_2$, and hence has contractible
matching complex, and $J\in {\cal G}^{m-2,\ell}_{s_1}$.
If $\ell\le m$, then by the induction assumption, $\match(J)$ is
homotopy equivalent to a wedge of spheres.
If $\ell = m+1$, we repeat the argument with the roles of $k$ and $\ell$
reversed, and reduce to the suspension of a matching complex for a graph
in ${\cal G}^{m-2,m-2}_{s_1}$.  So, again, by induction 
the matching complex of $G$ is 
homotopy equivalent to a wedge of spheres.

This completes the base case, $n=1$.  

Now we assume the result for extended polygonal line tilings with fewer
than $n$ basic cycles, $n\ge 2$, and let 
$G\in {\cal G}^{k,\ell}_{s_1,s_2,\ldots, s_n}$.
As in the base case, we consider different values of $k$ and $\ell$, assuming
$k\ge \ell$.

Assume $k=\ell=0$; then we need to consider separate cases based on the 
size of $s_n$.

Consider $s_n=4$.  Let $e=a_nb_n$ and $h=a_{n-1}b_{n-1}$.  Then $EN(e)\subset EN(h)$,
by Proposition~\ref{prop:open_neighborhood}, 
$\match(G) \simeq \match(G  \setminus\{h \})$.  The graph 
$G\setminus\{h\}$ is in the set 
${\cal G}^{0,0}_{s_1,s_2,\ldots, s_{n-2},s_{n-1}+2}$, so by the induction
assumption, the matching complex of $G$ is 
homotopy equivalent to a wedge of spheres.

Now consider $s_n=5$. 
 The 5-cycle minus the edge $a_{n-1}b_{n-1}$ forms a path of length four
with internal vertices of degree 2.  Then, 
by Proposition~\ref{prop:shorten_path},
$\match(G)$ is homotopy equivalent to the suspension of the matching complex 
of the (multi)graph $H$ 
obtained by shrinking the 5-cycle to a 2-cycle (pair of parallel edges).   Let $e$ and $h$ be those parallel edges in $H$. Then $EN[e] = EN[h]$, and
by Proposition~\ref{prop:shorten_path} and Proposition~\ref{double_edge} we obtain: 
\begin{eqnarray*} \match(G) &\simeq& \Sigma\match(H) \simeq 
\Sigma\left(\match(H\setminus\{h \}) \vee 
\Sigma\match(H \setminus EN[h])\right)\\ 
&\simeq& 
\Sigma\left(\match(H\setminus\{h \}) \right) \vee 
\Sigma^2\match(H \setminus EN[h]).\end{eqnarray*}
Here $H\setminus \{h\}\in {\cal G}^{0,0}_{s_1,s_2,\ldots, s_{n-1}}$
and $H\setminus EN[h]\in 
{\cal G}^{k',\ell'}_{s_1,s_2,\ldots, s_{n-2}}$ for some $k'$ and $\ell'$ with
$k'+\ell'= s_{n-1}-4$. 
(If $n=2$, $H\setminus EN[h]$ is a path of length $s_{n-1}-3$.) By the induction assumption, the matching complex of
each is homotopy equivalent to a wedge of spheres, and hence so is
the matching complex of $G$.

We now consider $s_n=6$.  
 The 6-cycle minus the edge $a_{n-1}b_{n-1}$ contains a path of length four
with internal vertices of degree 2.  So 
by Proposition~\ref{prop:shorten_path}
$\match(G)$ is the homotopy equivalent to the suspension of the matching 
complex of the graph 
$H$ obtained by shrinking the 6-cycle to a 3-cycle.  
Let $h=a_{n-1}b_{n-1}$ and let $e$ be one of the other edges of the 3-cycle 
in $H$.
Then $EN[e]\subset EN[h]$, and 
by Proposition~\ref{prop:closed_neigh}, 
$ \match(G) \simeq \Sigma\match(H)  \simeq 
\Sigma\left(\match(H\setminus\{h \}) \right) \vee 
\Sigma^2\match(H \setminus EN[h]).$
Here $H\setminus \{h\}\in 
{\cal G}^{0,0}_{s_1,s_2,\ldots, s_{n-1}+1}$
and $H\setminus EN[h]\in 
{\cal G}^{k',\ell'}_{s_1,s_2,\ldots, s_{n-2}}$ for some $k'$ and $\ell'$ with
$k'+\ell'= s_{n-1}-4$.  (Again, if $n=2$ $H\setminus EN[h]$ is a path of length $s_{n-1}-3$.)  
By the induction assumption, the matching complex of each is contractible or 
homotopy equivalent to a wedge of spheres, and hence so is
the matching complex of $G$.

Finally, consider $s_n\ge 7$.
 The $s_n$-cycle minus the edge $a_{n-1}b_{n-1}$ contains a path of length four
with internal vertices of degree 2.  Then 
by Proposition~\ref{prop:shorten_path}
$\match(G)$ is the homotopy equivalent to the suspension of the matching complex
of the graph $H$ obtained by shrinking the $s_n$-cycle to an $(s_n-3)$-cycle.  
This process can be repeated until the cycle shrinks to a cycle of length at 
most 6.
So by the above cases, the matching complex of $G$ is 
contractible or homotopy equivalent to a wedge of spheres.

This completes the $k=\ell=0$ case for $n$.

Now assume $k=1$, $0\le \ell\le 1$.  Let 
$e=t_0t_1=a_nt_1$ and let $h$ be the edge of the
$s_n$-cycle $h=a_nb_n$.  Thus $EN[e]\subset EN[h]$, and  
by Proposition~\ref{prop:closed_neigh},
$\match(G) \simeq \match(G  \setminus\{h \}) \vee 
\Sigma\match(G \setminus EN[h])$.
Here $G\setminus\{h\}\in
{\cal G}^{k',\ell'}_{s_1,s_2,\ldots, s_{n-1}}$,
with $k'+\ell' = s_n-1+\ell$, and 
$G\setminus EN[h]\in  
{\cal G}^{k'',\ell''}_{s_1,s_2,\ldots, s_{n-1}}$,
with $k''+\ell'' = s_n-4$.
By the induction assumption, the matching complex of each is contractible or 
homotopy equivalent to a wedge of spheres, and hence so is
the matching complex of $G$.

Next assume $k=2$, $0\le \ell \le 2$.
(For an example see Figure~\ref{reduction}.)
Let $e=t_1t_2$ and let $h=t_0t_1=a_nt_1$.
Thus $EN[e]\subset EN[h]$, and  
by Proposition~\ref{prop:closed_neigh} 
$$
\match(G) \simeq \match(G  \setminus\{h \}) \vee 
\Sigma\match(G \setminus EN[h]).$$

Here $G\setminus \{h\}=H\sqcup P_2$,
where $H\in {\cal G}^{0,\ell}_{s_1,s_2,\ldots, s_n}$.
Since $\match(P_2)$ is a single vertex, and the 
matching complex of a disjoint union is the join of the matching
complexes of the components,  $\match(H\sqcup P_2)$ is contractible.
Also, $G\setminus EN[h]\in
{\cal G}^{k',\ell'}_{s_1,s_2,\ldots, s_{n-1}}$,
with $k'+\ell'=s_n-3+\ell$.
By the induction assumption, the matching complex of each is contractible or 
homotopy equivalent to a wedge of spheres, and hence so is
the matching complex of $G$.

\begin{figure}[h]
\begin{center}
\begin{tikzpicture}
\filldraw[black] (-2.5,.5) circle (1.5pt);
\filldraw[black] (-1.5,.5) circle (1.5pt);
\filldraw[black] (-0.5,.5) circle (1.5pt);
\node[vertex] (a) at (0,1) {};
\node[vertex] (b) at (1,1) {};
\node[vertex] (c) at (2,2) {};
\node[vertex] (d) at (3,1) {};
\node[vertex] (e) at (4,1) {};
\node[vertex] (f) at (5,1) {};
\node[vertex] (g) at (0,0) {};
\node[vertex] (h) at (1,0) {};
\node[vertex] (i) at (3,0) {};
\node[vertex] (j) at (4,0) {};
\draw (a)--(b)--(c)--(d);
\draw (d)--(e) node[draw=none,fill=none,font=\small,midway,above] {$h$};
\draw (e)--(f) node[draw=none,fill=none,font=\small,midway,above] {$e$};
\draw (g)--(h)--(i)--(j);
\draw (a)--(g);
\draw (b)--(h);
\draw (d)--(i);
\draw (7,1) node[below] {$G\in {\cal G}_{s_1,\ldots, 4, 5}^{2,1}$};
\end{tikzpicture}

\vspace*{12pt}

\begin{tikzpicture}
\filldraw[black] (-2.5,.5) circle (1.5pt);
\filldraw[black] (-1.5,.5) circle (1.5pt);
\filldraw[black] (-0.5,.5) circle (1.5pt);
\node[vertex] (a) at (0,1) {};
\node[vertex] (b) at (1,1) {};
\node[vertex] (c) at (2,2) {};
\node[vertex] (d) at (3,1) {};
\node[vertex] (e) at (4,1) {};
\node[vertex] (f) at (5,1) {};
\node[vertex] (g) at (0,0) {};
\node[vertex] (h) at (1,0) {};
\node[vertex] (i) at (3,0) {};
\node[vertex] (j) at (4,0) {};
\draw (a)--(b)--(c)--(d);
\draw (e)--(f);
\draw (g)--(h)--(i)--(j);
\draw (a)--(g);
\draw (b)--(h);
\draw (d)--(i);
\draw (7,0) node[below] {$G\setminus \{h\} = H \sqcup P_1$,
$H \in {\cal G}_{s_1,\ldots, 4, 5}^{0,1}$};
\end{tikzpicture}

\vspace*{12pt}

\begin{tikzpicture}
\filldraw[black] (-2.5,.5) circle (1.5pt);
\filldraw[black] (-1.5,.5) circle (1.5pt);
\filldraw[black] (-0.5,.5) circle (1.5pt);
\node[vertex] (a) at (0,1) {};
\node[vertex] (b) at (1,1) {};
\node[vertex] (c) at (2,2) {};
\node[vertex] (g) at (0,0) {};
\node[vertex] (h) at (1,0) {};
\node[vertex] (i) at (3,0) {};
\node[vertex] (j) at (4,0) {};
\draw (a)--(b)--(c);
\draw (g)--(h)--(i)--(j);
\draw (a)--(g);
\draw (b)--(h);
\draw (7,1) node[below] {$G\setminus EN[h]
 \in {\cal G}_{s_1,\ldots, 4}^{1,2}$};
\end{tikzpicture}
\caption{Example of reduction for $s_n=5$, $k=2$ \label{reduction}}
\end{center}
\end{figure}
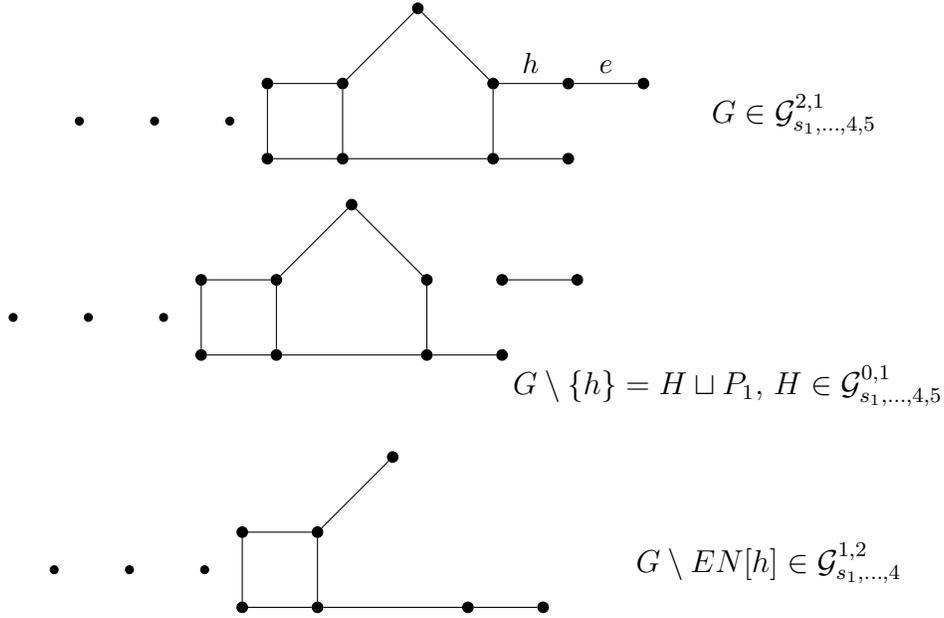

Now, inductively, assume that if both $k$ and $\ell$ are at most $m\ge 2$, then
the matching complex of 
$G\in{\cal G}^{k,\ell}_{s_1,s_2,\ldots, s_n}$
is contractible or homotopy equivalent to a wedge of spheres.
Let $\ell\le k = m+1$ and let 
$G\in{\cal G}^{m+1,\ell}_{s_1,s_2,\ldots, s_n}$
Let $e=t_mt_{m+1}$ and $h=t_{m-1}t_m$.  
Thus $EN[e]\subset EN[h]$, and  
by Proposition~\ref{prop:closed_neigh},
$$ \match(G) \simeq \match(G  \setminus\{h \}) \vee 
\Sigma\match(G \setminus EN[h]).$$
Here $G\setminus\{h\}$ is the disjoint union of a graph in 
${\cal G}^{m-1,\ell}_{s_1,s_2,\ldots, s_n}$
and
$P_2$, and hence has contractible
matching complex, and 
$\Sigma\match(G \setminus EN[h])\in 
{\cal G}^{m-2,\ell}_{s_1,s_2,\ldots, s_n}$.
If $\ell\le m$, then by the induction assumption, 
$\match(G \setminus EN[h])$ is
homotopy equivalent to a wedge of spheres.
If $\ell = m+1$, we repeat the argument with the roles of $k$ and $\ell$
reversed, and reduce to the suspension of a matching complex for a graph
${\cal G}^{m-2,m-2}_{s_1,s_2,\ldots, s_n}$.
So, again, by induction 
the matching complex of $G$ is contractible or 
homotopy equivalent to a wedge of spheres.

This completes the induction on $n$ and hence the proof.

\end{proof}

\section{Line tilings by triangles}

In the last section, we restricted the cycles in the tilings to be of length
four or greater, to avoid three cycles intersecting at a point.
Now we look at the special case of a line of triangles.

\begin{definition}
Let $t$ be a positive integer.
A {\em regular triangular line tiling} is a graph
$P_{3,t}$ with vertex set $V$ and edge set $E$ as follows: 
\begin{itemize}
\item $V = \{a_i\mid 0\le i\le \lceil t/2\rceil \} \cup
          \{b_i\mid 0\le i\le \lfloor t/2\rfloor \} $
\item $E = \{a_ia_{i+1}\mid 0\le i\le \lfloor (t-1)/2\rfloor \} \cup
            \{b_ib_{i+1}\mid 0\le i\le \lfloor (t-2)/2\rfloor \} \cup
            \{a_{i+1}b_i \mid 0\le i\le \lfloor (t-1)/2\rfloor \} \cup \{a_ib_i \mid 0\le i\le \lfloor t/2\rfloor \}.$
\end{itemize}
We extend this definition to $t=0$, where it gives a single edge $a_0b_0$.
\end{definition}

See Figure~\ref{P35}.

\begin{figure}
\begin{center}
\begin{tikzpicture}
\draw (0,3) node[above left] {$a_0$}--
      (2,3) node[above] {$a_1$}--
      (4,3) node[above] {$a_2$}--
      (6,3) node[above right] {$a_3$}--
      (4,0) node[below right] {$b_2$}--
      (2,0) node[below] {$b_1$}--
      (0,0) node[below left] {$b_0$}--cycle;
\draw (0,0)--(2,3)--(2,0)--(4,3)--(4,0);
\filldraw[black] (0,0) circle (2pt);
\filldraw[black] (0,3) circle (2pt);
\filldraw[black] (2,0) circle (2pt);
\filldraw[black] (2,3) circle (2pt);
\filldraw[black] (4,0) circle (2pt);
\filldraw[black] (4,3) circle (2pt);
\filldraw[black] (6,3) circle (2pt);
\end{tikzpicture}
\caption{$P_{3,5}$\label{P35}}
\end{center}
\end{figure}

\begin{theorem}\label{3-gonal_tiling}
Let $P_{3,t}$ be a regular triangular line tiling. Then,
\begin{eqnarray*}
\match(P_{3,0}) &\simeq& *, \quad \match(P_{3,1}) \simeq \match(P_{3, 2}) 
\simeq \bigvee\limits_2 S^0 \\
\match(P_{3, 3 })&\simeq& S^1, \quad 
\match(P_{3,4}) \simeq \bigvee\limits_5 S^1 \quad
\end{eqnarray*}
and for $t\ge 5$,
\[
\match(P_{3,t}) \simeq \Sigma \match(P_{3,t-3}) \vee \Sigma \match(P_{3,t-3}) \vee \Sigma^2\match(P_{3,t-5}). 
\]

Thus  $\match(P_{3,t})$ is contractible or homotopy equivalent to a wedge of 
spheres for all $t \geq 1.$
\end{theorem}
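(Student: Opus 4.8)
The plan is to prove the recursion by applying the edge-neighborhood reductions already assembled in Section 2, choosing for each $t$ an edge of $P_{3,t}$ whose closed neighborhood dominates that of another edge. The natural choice is to work at the "open end" of the tiling, near the last triangle. For $t\ge 5$, let $h=a_{\lfloor (t-1)/2\rfloor+1}b_{\lfloor (t-1)/2\rfloor}$ or the analogous terminal diagonal edge, and let $e$ be an edge of the final triangle incident to the degree-two vertex at the tip; then $EN[e]\subset EN[h]$ and Proposition~\ref{prop:closed_neigh} gives $\match(P_{3,t})\simeq\match(P_{3,t}\setminus\{h\})\vee\Sigma\match(P_{3,t}\setminus EN[h])$. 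The first piece should be (a graph with matching complex equivalent to) $\Sigma\match(P_{3,t-3})$ — removing $h$ detaches a short path or an isolated vertex from what remains, and Corollary~\ref{path_length_3} or Proposition~\ref{prop:simplicial_edge} handles the leftover — while the second piece, after deleting the whole neighborhood $EN[h]$, should leave $P_{3,t-5}$ together with a pendant path that contributes one more suspension. Reconciling the two suspension shifts $\Sigma\match(P_{3,t-3})$ against $\Sigma^2\match(P_{3,t-5})$ with the claimed two copies of $\Sigma\match(P_{3,t-3})$ will require doing the reduction carefully — most likely applying Proposition~\ref{prop:closed_neigh} a second time, or exploiting a parallel-edge situation via Proposition~\ref{double_edge}, exactly as in the $s_n=5$ case of the previous theorem.

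After the recursion is established, the final statement follows by a clean induction on $t$. First I would verify the base cases $t=0,1,2,3,4$ directly: $P_{3,0}$ is an edge with contractible (in fact a point) matching complex; $P_{3,1}$ and $P_{3,2}$ are small graphs whose matching complexes are easily checked to be $\bigvee_2 S^0$; and $P_{3,3}$ (a bowtie-like graph on two triangles sharing an edge, with one pendant edge) and $P_{3,4}$ can be computed by hand or by one application of Proposition~\ref{prop:closed_neigh}. For the inductive step, assume $\match(P_{3,t'})$ is contractible or a wedge of spheres for all $t'<t$; the recursion expresses $\match(P_{3,t})$ as a wedge of (single or double) suspensions of $\match(P_{3,t-3})$ and $\match(P_{3,t-5})$. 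Since suspension of a wedge of spheres is again a wedge of spheres (one dimension up), and suspension of a contractible space is contractible, the wedge of these three suspended pieces is again contractible or a wedge of spheres. (One must note that a wedge in which some summands are contractible is homotopy equivalent to the wedge of the remaining summands, so no difficulty arises when $\match(P_{3,t-3})$ or $\match(P_{3,t-5})$ happens to be contractible.)

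I expect the main obstacle to be the bookkeeping in deriving the recursion — specifically, identifying precisely which subgraphs arise as $P_{3,t}\setminus\{h\}$ and $P_{3,t}\setminus EN[h]$, and pushing the pendant paths through using Corollary~\ref{path_length_3} and Proposition~\ref{prop:shorten_path} to land exactly on $P_{3,t-3}$ and $P_{3,t-5}$ with the correct number of suspensions and the correct multiplicity of the $\Sigma\match(P_{3,t-3})$ summand. The edge cases where $t-3$ or $t-5$ is small (so that $P_{3,t-5}$ degenerates to a path or a single edge) need to be checked against the explicitly listed base values to confirm the recursion is consistent there. Once the geometry of these deletions is pinned down, the homotopy-theoretic conclusion is immediate from the stability of the class "contractible or wedge of spheres" under suspension and wedge.
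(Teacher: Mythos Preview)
Your outline names the right toolkit and has the right shape, but the identification of the two pieces after the first split is reversed, and one preliminary step is missing. Take $h=a_1b_0$ and $e=a_0b_0$ at the $a_0$ end (equivalent by symmetry to your ``terminal'' end). Deleting the full closed neighborhood $EN[h]$ removes every edge meeting $a_0$, $a_1$, or $b_0$, and what remains is exactly $P_{3,t-3}$ --- no pendant path, no extra suspension. So the summand $\Sigma\match(P_{3,t}\setminus EN[h])$ is already one copy of $\Sigma\match(P_{3,t-3})$. The other copy of $\Sigma\match(P_{3,t-3})$ \emph{and} the $\Sigma^2\match(P_{3,t-5})$ must both be extracted from $\match(P_{3,t}\setminus\{h\})$, and your sketch does not explain how; your guess that $P_{3,t}\setminus\{h\}$ is ``essentially $P_{3,t-3}$ with a stray path'' is not correct, since $P_{3,t}\setminus\{a_1b_0\}$ still contains the vertical edge $a_1b_1$ and has no pendant vertex.

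The paper resolves this with the following sequence. \emph{Before} splitting, observe $EN(a_0b_0)\subset EN(a_1b_1)$ and use Proposition~\ref{prop:open_neighborhood} to delete the vertical edge $a_1b_1$ for free: $\match(P_{3,t})\simeq\match(P_{3,t}\setminus\{a_1b_1\})$. Now apply Proposition~\ref{prop:closed_neigh} with $e=a_0b_0$, $h=a_1b_0$ to get $\match(P_{3,t}\setminus\{a_1b_1,a_1b_0\})\vee\Sigma\match(P_{3,t-3})$. In the graph $P_{3,t}\setminus\{a_1b_1,a_1b_0\}$ the vertices $a_2,a_1,a_0,b_0,b_1$ form an induced path of length~$4$ whose interior vertices have degree~$2$ (this is exactly why $a_1b_1$ had to be removed first); Proposition~\ref{prop:shorten_path} contracts it and produces $P_{3,t-3}$ with a doubled first edge, and Proposition~\ref{double_edge} then splits that as $\match(P_{3,t-3})\vee\Sigma\match(P_{3,t-5})$. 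This is precisely the ``parallel-edge situation via Proposition~\ref{double_edge}, exactly as in the $s_n=5$ case'' that you anticipated --- but it lives inside the edge-deletion piece, not the neighborhood-deletion piece, and it only becomes available after the preliminary open-neighborhood collapse. Once this is in place, your inductive wrap-up is fine.
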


\begin{proof}
The homotopy types for $t \leq 4$ are straightforward. See Appendix. Now assume $t\ge 5$.

Since $EN(a_0b_0) \subset EN(a_1b_1)$ in $P_{3,t}$, Proposition~\ref{prop:open_neighborhood} gives us $\match(P_{3,t}) \simeq \match(P_{3,t} \setminus \{a_1b_1\})$. 

In $\match(P_{3,t} \setminus \{a_1b_1\})$ we see  $EN[a_0b_0]\subset EN[a_1 b_0]$.  Hence by Proposition~\ref{prop:closed_neigh},
\[
\match(P_{3,t} \setminus \{a_1b_1\}) \simeq \match(P_{3,t} \setminus \{a_1b_1,a_1b_0\}) \vee \Sigma \match((P_{3,t} \setminus \{a_1b_1\}) \setminus EN[a_1b_0]).
\]
Since $(P_{3,t} \setminus \{a_1b_1\}) \setminus EN[a_1b_0]$ is isomorphic to $P_{3,t-3}$,
\[
\match(P_{3,t} \setminus \{a_1b_1\}) \simeq \match(P_{3,t} \setminus \{a_1b_1,a_1b_0\}) \vee \Sigma \match(P_{3,t-3}).
\]
We now turn our attention to 
$\match(P_{3,t} \setminus \{a_1b_1,a_1b_0\})$. 
 In $P_{3,t} \setminus \{a_1b_1,a_1b_0\}$ the vertices
$a_2, a_1, a_0, b_0,$ and  $b_1$ form an induced path of length $4$, call it $X$. 
 Contracting path $X$ we obtain a graph isomorphic to $P_{3,t-3}$ with an additional double edge $x$ (with same vertices as edge $a_2b_1$); call this graph $P_{3,t-3}'= P_{3,t-3} \cup \{x\}$.
 Then by Proposition~\ref{prop:shorten_path}, 
 $$\match(P_{3,t} \setminus \{a_1b_1,a_1b_0\}) \simeq \Sigma\match(P_{3,t-3}').$$
 Further, we apply Proposition~\ref{double_edge} and obtain 
 $$\match(P_{3,t-3}') \simeq \match(P_{3,t-3}) \vee \Sigma \match(P_{3,t-3} \setminus EN_{P_{3,t-3}}[x]).$$ 
 The graph $P_{3,t-3} \setminus EN_{P_{3,t-3}}[x]$ is isomorphic to $P_{3,t-5}$. 
 Together the homotopy equivalences obtained imply

 \[
\match(P_{3,t}) \simeq \Sigma \match(P_{3,t-3}) \vee \Sigma \match(P_{3,t-3}) \vee \Sigma^2\match(P_{3,t-5}). 
\]
 Hence $\match(P_{3,t})$ is homotopy equivalent to a wedge of spheres, or contractible for all $t \geq 1.$ 
\end{proof}

\begin{corollary}
Let $s(t,d)$ be the number of spheres of dimension $d$ in the wedge that is the
homotopy type of $\match(P_{3,t})$.
Then for $t\ge 7$ and $d\ge 2$, $s(t,d) = {2s(t-3,d-1)}+s(t-5,d-2)$ and
$$\sum_{t\ge 2, d\ge 0}
s(t,d)x^ty^d = \frac{2x^2+x^3y+5x^4y+2x^6y^2}{1-2x^3y-x^5y^2}.$$
\end{corollary}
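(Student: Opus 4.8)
The plan is to derive the recursion for $s(t,d)$ directly from the homotopy equivalence established in Theorem~\ref{3-gonal_tiling}, and then to package it into the generating function. Recall that for $t \ge 5$,
\[
\match(P_{3,t}) \simeq \Sigma \match(P_{3,t-3}) \vee \Sigma \match(P_{3,t-3}) \vee \Sigma^2 \match(P_{3,t-5}).
\]
Since suspension shifts the dimension of each sphere up by one, a sphere $S^{d-1}$ appearing in $\match(P_{3,t-3})$ contributes an $S^d$ to each of the two copies of $\Sigma \match(P_{3,t-3})$, and a sphere $S^{d-2}$ in $\match(P_{3,t-5})$ contributes an $S^d$ to $\Sigma^2 \match(P_{3,t-5})$. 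Hence $s(t,d) = 2 s(t-3, d-1) + s(t-5, d-2)$ whenever $t \ge 5$ and the right-hand side makes sense. First I would check the range: for $d \ge 2$ and $t \ge 7$ all indices $t-3, t-5 \ge 2$ fall in the regime where the homotopy types are genuine wedges of spheres (from the base cases $t \le 4$ together with the recursion), so no contractibility exceptions intrude and the relation $s(t,d) = 2 s(t-3,d-1) + s(t-5,d-2)$ holds cleanly. One must be slightly careful at the boundary of the range (small $t$ or small $d$), which is exactly why the recursion is stated only for $t \ge 7$, $d \ge 2$; the remaining values of $s(t,d)$ are absorbed into the ``initial'' terms of the generating function.

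Next I would set $F(x,y) = \sum_{t \ge 2,\, d \ge 0} s(t,d) x^t y^d$ and translate the recursion. Multiplying $s(t,d) = 2 s(t-3,d-1) + s(t-5,d-2)$ by $x^t y^d$ and summing over the range where it is valid gives, formally,
\[
F(x,y) = 2 x^3 y\, F(x,y) + x^5 y^2\, F(x,y) + C(x,y),
\]
where $C(x,y)$ collects the correction coming from the pairs $(t,d)$ not covered by the recursion — these are precisely the low-degree terms. So $F(x,y) = C(x,y) / (1 - 2x^3 y - x^5 y^2)$, and it remains to identify $C(x,y)$. I would compute it by reading off the homotopy types for the small cases from Theorem~\ref{3-gonal_tiling} and the Appendix: $\match(P_{3,2}) \simeq \bigvee_2 S^0$ gives the term $2x^2$; $\match(P_{3,3}) \simeq S^1$ gives $x^3 y$; $\match(P_{3,4}) \simeq \bigvee_5 S^1$ gives $5 x^4 y$; and working out $\match(P_{3,6})$ via the recursion ($\match(P_{3,6}) \simeq \Sigma\match(P_{3,3}) \vee \Sigma\match(P_{3,3}) \vee \Sigma^2\match(P_{3,1})$, and $\match(P_{3,1}) \simeq \bigvee_2 S^0$) yields two copies of $S^2$ from the suspensions of $S^1$ plus two copies of $S^2$ from $\Sigma^2 \match(P_{3,1})$ — but one has to subtract whatever is already generated by $2x^3 y \cdot F$ and $x^5 y^2 \cdot F$ at that monomial, so the net new contribution at $x^6 y^2$ is $2 x^6 y^2$. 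Assembling these, $C(x,y) = 2x^2 + x^3 y + 5 x^4 y + 2 x^6 y^2$, which gives exactly the claimed formula.

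The main obstacle is the bookkeeping of the correction polynomial $C(x,y)$: the recursion is only asserted for $t \ge 7$ and $d \ge 2$, so one must carefully determine, for each small monomial $x^t y^d$ with $t \in \{2,3,4,5,6\}$ or $d \le 1$, how much of $s(t,d)$ is ``explained'' by $2x^3 y F + x^5 y^2 F$ and how much must be added by hand. Concretely I would argue that $s(t,d) = 0$ unless $t \ge 2$ and $1 \le d \le \lfloor (t-1)/2 \rfloor$ roughly (plus the $d=0$ cases at $t=1,2$), pin down the finitely many nonzero small values — $s(2,0)=2$, $s(3,1)=1$, $s(4,1)=5$, $s(5,1)$, $s(6,2)$, etc. — from the base cases and one round of the recursion, and then verify monomial-by-monomial that $C(x,y) = (1 - 2x^3 y - x^5 y^2) F(x,y)$ has only the four stated terms, i.e.\ that all coefficients of $x^t y^d$ with $t \ge 7$ or with $(t,d)$ outside the support of $C$ vanish. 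Once $C$ is confirmed, clearing the denominator gives the result; this is a routine but genuinely error-prone verification, so I would present it as a short table of the relevant low-order coefficients rather than a narrative computation.
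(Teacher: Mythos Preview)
Your proposal is correct and follows essentially the same route as the paper: derive the recursion $s(t,d)=2s(t-3,d-1)+s(t-5,d-2)$ from the wedge decomposition of Theorem~\ref{3-gonal_tiling}, then translate it into the functional equation $(1-2x^3y-x^5y^2)F(x,y)=C(x,y)$ and identify $C$ from the small cases $t\le 6$. The paper does this by writing out the initial terms $2x^2+x^3y+5x^4y+4x^5y+4x^6y^2$ explicitly, re-indexing the sums, and subtracting the overlap $4x^5y+2x^6y^2$, which is exactly your ``correction polynomial'' bookkeeping carried out term by term.
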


\begin{proof}
Let $\displaystyle q(x,y) =
\sum_{\genfrac{}{}{0pt}{2}{t\ge 2}{d\ge 0}} s(t,d)x^t y^d$.  From Theorem~\ref{3-gonal_tiling}, we get $s(t,d)$ for $t\le 6$ or $d\le 2$, and 
the recursion 
$\match(P_{3,t}) \simeq
\bigvee\limits_2\Sigma \match(P_{3,t-3}) \vee \Sigma^2\match(P_{3,t-5})$ gives $s(t,d)=2s(t-3,d-1)+s(t-5,d-2)$ for $t\ge 7$, $d\ge 2$. 

Thus

\begin{eqnarray*}
q(x,y) &=& 2x^2+x^3y+5x^4y+4x^5y+4x^6y^2 + \sum_{\genfrac{}{}{0pt}{2}{t\ge 7}{d\ge 2}} s(t,d)x^ty^d\\
       &=& 2x^2+x^3y+5x^4y+4x^5y+4x^6y^2 \\ &+&
           \sum_{\genfrac{}{}{0pt}{2}{t\ge 7}{d\ge 2}} 2s(t-3,d-1)x^ty^d +
           \sum_{\genfrac{}{}{0pt}{2}{t\ge 7}{d\ge 2}} s(t-5,d-2)x^ty^d \\
       &=& 2x^2+x^3y+5x^4y+4x^5y+4x^6y^2  \\ &+&
           \sum_{\genfrac{}{}{0pt}{2}{t\ge 4}{d\ge 1}} 2s(t,d)x^{t+3}y^{d+1} +
           \sum_{\genfrac{}{}{0pt}{2}{t\ge 2}{d\ge 0}} s(t,d)x^{t+5}y^{d+2} \\
       &=& 2x^2+x^3y+5x^4y+4x^5y+4x^6y^2  \\ &+&
           \sum_{\genfrac{}{}{0pt}{2}{t\ge 2}{d\ge 0}} 2s(t,d)x^{t+3}y^{d+1} - 4x^5y-2x^6y^2 +
           \sum_{\genfrac{}{}{0pt}{2}{t\ge 2}{d\ge 0}} s(t,d)x^{t+5}y^{d+2}.
\end{eqnarray*}
So
$$ q(x,y)(1-2x^3y-x^5y^2) = 2x^2+x^3y+5x^4y+2x^6y^2.$$
\end{proof}
\begin{theorem}
For $t\ge 2$, let
$D_t$ be the set of dimensions of the spheres occurring in the wedge of spheres
that gives the homotopy type of $\match(P_{3,t})$.
Let
$\displaystyle I_t = \left[ \left\lfloor \frac{t}{3} \right\rfloor, \frac{2t-f(t)}{5}\right]$,
where
$$f(t) = \left\{ \begin{array}{cl}
         5 & \mbox{if $t \equiv 0 \!\!\! \pmod{5}$}\\
         2 & \mbox{if $t \equiv 1 \!\!\! \pmod{5}$}\\
         4 & \mbox{if $t \equiv 2 \!\!\! \pmod{5}$}\\
         1 & \mbox{if $t \equiv 3 \!\!\! \pmod{5}$}\\
         3 & \mbox{if $t \equiv 4 \!\!\! \pmod{5}$} \end{array}\right. .$$
Then $D_t=I_t$.
\end{theorem}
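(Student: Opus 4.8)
The plan is to prove $D_t = I_t$ by strong induction on $t$, using the recursion from Theorem~\ref{3-gonal_tiling}:
\[
\match(P_{3,t}) \simeq \Sigma \match(P_{3,t-3}) \vee \Sigma \match(P_{3,t-3}) \vee \Sigma^2\match(P_{3,t-5}).
\]
Passing to dimensions, a sphere of dimension $d$ in $\match(P_{3,t-3})$ contributes a sphere of dimension $d+1$, and a sphere of dimension $d$ in $\match(P_{3,t-5})$ contributes a sphere of dimension $d+2$. Hence the recursion for the dimension sets is
\[
D_t = (D_{t-3}+1) \cup (D_{t-5}+2),
\]
where $S+c$ denotes the translate $\{d+c : d\in S\}$. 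So it suffices to verify that the closed integer intervals $I_t$ satisfy the same recursion, i.e.\ that $I_t = (I_{t-3}+1)\cup(I_{t-5}+2)$ for $t\ge 7$ (using the explicit homotopy types for $t\le 6$ as base cases), together with a check that the translated intervals actually overlap or abut so that their union is again an interval.

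First I would set up the base cases $t=2,3,4,5,6$ directly from Theorem~\ref{3-gonal_tiling} (and the Appendix computations): $D_2=\{0\}$, $D_3=\{1\}$, $D_4=\{1\}$, $D_5=\{1,2\}$, $D_6=\{1,2\}$ say, and confirm each equals the corresponding $I_t$ by plugging into the formulas for $\lfloor t/3\rfloor$, $f(t)$, and $(2t-f(t))/5$. Then for the inductive step with $t\ge 7$, the left endpoint: I must show $\lfloor t/3\rfloor = \min\bigl(\lfloor (t-3)/3\rfloor + 1,\ \lfloor(t-5)/3\rfloor+2\bigr)$. Since $\lfloor(t-3)/3\rfloor+1 = \lfloor t/3\rfloor$ always, and $\lfloor(t-5)/3\rfloor+2 = \lfloor(t+1)/3\rfloor \ge \lfloor t/3\rfloor$, the minimum is $\lfloor t/3\rfloor$, as needed. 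The right endpoint: I must show $\frac{2t-f(t)}{5} = \max\bigl(\frac{2(t-3)-f(t-3)}{5}+1,\ \frac{2(t-5)-f(t-5)}{5}+2\bigr)$. Here $\frac{2(t-3)-f(t-3)}{5}+1 = \frac{2t-1-f(t-3)}{5}$ and $\frac{2(t-5)-f(t-5)}{5}+2 = \frac{2t-f(t-5)}{5} = \frac{2t-f(t)}{5}$, since $f$ depends only on $t\bmod 5$. So the right endpoint will be $\frac{2t-f(t)}{5}$ provided $\frac{2t-1-f(t-3)}{5} \le \frac{2t-f(t)}{5}$, i.e.\ $f(t)\le f(t-3)+1$; this is a finite residue-class check: $f(t-3)$ ranges over $f(3),f(4),f(0),f(1),f(2)=1,3,5,2,4$ as $t$ ranges over $0,1,2,3,4\bmod 5$, while $f(t)=5,2,4,1,3$, and indeed $f(t)\le f(t-3)+1$ in each of the five cases.

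The remaining point — and the one requiring the most care — is that $(I_{t-3}+1)\cup(I_{t-5}+2)$ is genuinely a single interval equal to $[\lfloor t/3\rfloor, \frac{2t-f(t)}{5}]$, rather than a union of two intervals with a gap. The two translated intervals are $[\lfloor t/3\rfloor,\ \frac{2t-1-f(t-3)}{5}]$ and $[\lfloor(t+1)/3\rfloor,\ \frac{2t-f(t)}{5}]$; their union is an interval precisely when the left endpoint of the second, $\lfloor(t+1)/3\rfloor$, is at most one more than the right endpoint of the first, $\frac{2t-1-f(t-3)}{5}$. This is again a bounded inequality but now genuinely of arithmetic type, since it mixes a floor-of-$t/3$ term with a $t/5$ term; I would handle it by writing $t = 15q+r$ with $0\le r<15$ and checking the fifteen residues, or more efficiently by noting that for $t\ge 7$ the right endpoint of the first interval is already large enough — one shows $\frac{2t-1-f(t-3)}{5} \ge \lfloor(t+1)/3\rfloor - 1$, equivalently $6t-3-3f(t-3) \ge 5\lfloor(t+1)/3\rfloor - 15$, which holds comfortably once $t$ is bounded below (the slack grows linearly, so only small $t$ need manual verification, and those are covered by the base cases). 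Assembling the two endpoint computations with this overlap fact completes the induction, giving $D_t = I_t$ for all $t\ge 2$. The main obstacle is thus purely bookkeeping: keeping the five (or fifteen) residue classes straight and making sure the interval-union step never drops an integer.
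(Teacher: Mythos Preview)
Your approach is correct and genuinely different from the paper's. A couple of small slips: the base values you wrote for $t=5,6$ are off---from the recursion (or the Appendix) one has $D_5=\{1\}$ and $D_6=\{2\}$, matching $I_5=[1,1]$ and $I_6=[2,2]$---and the phrase ``covered by the base cases'' for the overlap check is imprecise, since the overlap inequality is part of the inductive step for $t\ge 7$. But the inequality $\lfloor (t+1)/3\rfloor \le \tfrac{2t-1-f(t-3)}{5}+1$ is easily disposed of: dropping the floor, it reduces to $t\ge 3f(t-3)-7$, which holds for all $t\ge 8$ since $f\le 5$, and $t=7$ is a one-line check. With those fixes your argument goes through.

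The paper proceeds differently. It splits the equality into two containments. For $D_t\subseteq I_t$ it runs essentially your endpoint computation (min and max of $D_t$ match those of $I_t$). For $I_t\subseteq D_t$ it does \emph{not} argue by interval overlap; instead it reads off explicit dimensions from the generating function $q(x,y)$ of the preceding Corollary, noting that every monomial $(2x^2)(2x^3y)^\alpha(x^5y^2)^\beta$ appears with positive coefficient, so $\alpha+2\beta\in D_{3\alpha+5\beta+2}$, and then shows that as $(\alpha,\beta)$ ranges over nonnegative solutions of $3\alpha+5\beta+2=t$ the values $\alpha+2\beta$ sweep out all of $I_t$ (possibly missing the top element, which Part~I already places in $D_t$). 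Your purely recursive argument is more elementary---it never touches the generating function and handles both containments at once---while the paper's Part~II is more constructive, in that it pinpoints which iterated applications of the recursion actually produce each sphere dimension.
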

\begin{proof}
Part I.
$\displaystyle D_t \subseteq I_t$.

The proof is by induction on $t\ge 2$.  The statement is true for the base cases,
$2\le t\le 6$.  So assume $t\ge 7$ and the statement is  true for all smaller $t$.
Theorem~\ref{3-gonal_tiling} implies that
$D_t = \{r+1\mid r\in D_{t-3}\} \cup \{r+2\mid r\in D_{t-5}\}$.
We consider first the smallest integer in $D_t$.

\begin{eqnarray*}
\min(D_t)&=& \min(\min(D_{t-3})+1, \min(D_{t-5})+2) \\
           &=&\min\left(\left\lfloor \frac{t-3}{3}\right\rfloor+1,\left\lfloor\frac{t-5}{3}\right\rfloor+2\right)\\
           &=&\min\left(\left\lfloor \frac{t}{3}\right\rfloor, \left\lfloor \frac{t+1}{3}\right\rfloor\right)
           =\left\lfloor \frac{t}{3}\right\rfloor \end{eqnarray*}

Now for the largest integer in $D_t$.
\begin{eqnarray*}
\max(D_t) &=& \max(\max(D_{t-3})+1,\max(D_{t-5})+2)\\
          &=& \max\left(\frac{2t-1-f(t-3)}{5},\frac{2t-f(t-5)}{5}\right)
\end{eqnarray*}

We calculate this for each congruence class modulo 5.
\begin{itemize}
\item $t \equiv t-5 \equiv 0  \pmod{5}$, $t-3 \equiv 2  \pmod{5}$

      $\displaystyle \max(D_t) = \max \left(\frac{2t-5}{5},\frac{2t-5}{5}\right) =
       \frac{2t-5}{5} = \frac{2t-f(t)}{5}$
\item $t \equiv t-5 \equiv 1  \pmod{5}$, $t-3 \equiv 3  \pmod{5}$

      $\displaystyle \max(D_t) = \max \left(\frac{2t-2}{5},\frac{2t-2}{5}\right) =
       \frac{2t-2}{5} = \frac{2t-f(t)}{5}$
\item $t \equiv t-5 \equiv 2  \pmod{5}$, $t-3 \equiv 4  \pmod{5}$

      $\displaystyle \max(D_t) = \max \left(\frac{2t-4}{5},\frac{2t-4}{5}\right) =
       \frac{2t-4}{5} = \frac{2t-f(t)}{5}$
\item $t \equiv t-5 \equiv 3  \pmod{5}$, $t-3 \equiv 0  \pmod{5}$

      $\displaystyle \max(D_t) = \max \left(\frac{2t-6}{5},\frac{2t-1}{5}\right) =
       \frac{2t-1}{5} = \frac{2t-f(t)}{5}$
\item $t \equiv t-5 \equiv 4  \pmod{5}$, $t-3 \equiv 1  \pmod{5}$

      $\displaystyle \max(D_t) = \max \left(\frac{2t-3}{5},\frac{2t-3}{5}\right) =
       \frac{2t-3}{5} = \frac{2t-f(t)}{5}$
\end{itemize}
So $\min(D_t) = \min(I_t)$ and $\max(D_t)=\max(I_t)$.

Part II.
$\displaystyle I_t \subseteq D_t$.

Note that in the expansion of the rational function for $q(x,y)$ there is
no subtraction.
We consider one set of monomials that occur in $q(x,y)$ with positive
coefficients, namely those  of the form $(2x^2)(2x^3y)^\alpha (x^5y^2)^\beta
= 2^{\alpha+1}x^{3\alpha+5\beta+2}y^{\alpha+2\beta}$.
Each such monomial $cx^ty^d$ represents $c$ spheres of dimension $d$ in the
wedge of spheres for the  homotopy type of $\match(P_{3,t})$, that is, an element
$d$ of $D_t$.
We are not concerned with the coefficient $c$, which for these monomials
is positive.  So we will consider just the exponents.  We know that
for every nonnegative integers $\alpha$ and $\beta$,
$\alpha+2\beta\in D_{3\alpha+5\beta+2}$.
Also, for every integer $t\ge 5$, there exist $\alpha\ge 0$ and $\beta\ge 0$
such that $t= 3\alpha+5\beta+2$. (In general,
the $\alpha$ and $\beta$ are not uniquely determined.)
In what follows we sometimes assume $t\ge 14$;
it is easy to check that the statement of the theorem is true for smaller $t$.  (See Appendix.)
We show that for fixed $t\ge 14$, all of these elements of $D_t$ fill the
interval $I_t$.

Let $A_t=\{(\alpha,\beta)\mid 3\alpha+5\beta+2 = t\}$
and $f(\alpha,\beta) = \alpha+2\beta$.
Note that if $(\alpha,\beta)\in A_t$ and $\alpha\ge 5$, then
$(\alpha-5,\beta+3)\in A_t$ and
$f(\alpha-5,\beta+3)= f(\alpha,\beta)+1$.
Using this we will produce an interval of sphere dimensions for fixed $t$.

Fix $t\ge 14$.
The minimum of $\alpha+2\beta$ for
$(\alpha,\beta)\in A_t$ occurs when $\alpha$ is greatest and $\beta$ is
least; values are in the following table. 

\vspace*{6pt}

\begin{tabular}{|l|l|l|}\hline
$t\equiv 0 \pmod{3}$ & $\alpha = (t-12)/3$, $\beta = 2$ &
         $\alpha+2\beta = t/3$ \\ \hline
$t\equiv 1 \pmod{3}$ & $\alpha = (t-7)/3$, $\beta = 1$ &
         $\alpha+2\beta = (t-1)/3$ \\ \hline
$t\equiv 2 \pmod{3}$ & $\alpha = (t-2)/3$, $\beta = 0$ &
         $\alpha+2\beta = (t-2)/3$  \\ \hline
\end{tabular}

\vspace*{6pt}

The maximum of $\alpha+2\beta$ for
$(\alpha,\beta)\in A_t$ occurs when $\alpha$ is least and $\beta$ is
greatest; values are in the following table.

\vspace*{6pt}

\begin{tabular}{|l|l|l|}\hline
$t\equiv 0 \pmod{5}$ & $\alpha = 1 $, $\beta = (t-5)/5$ &
         $\alpha+2\beta = (2t-5)/5$ \\ \hline
$t\equiv 1 \pmod{5}$ & $\alpha = 3 $, $\beta = (t-11)/5$ &
         $\alpha+2\beta = (2t-7)/5$ \\ \hline
$t\equiv 2 \pmod{5}$ & $\alpha = 0 $, $\beta = (t-2)/5$ &
         $\alpha+2\beta = (2t-4)/5$ \\ \hline
$t\equiv 3 \pmod{5}$ & $\alpha = 2 $, $\beta = (t-8)/5$ &
         $\alpha+2\beta = (2t-6)/5$ \\ \hline
$t\equiv 4 \pmod{5}$ & $\alpha = 4 $, $\beta = (t-14)/5$ &
         $\alpha+2\beta = (2t-8)/5$ \\ \hline
\end{tabular}

\vspace*{6pt}

Note in all cases a pair in $A_t$ produces the minimum value in the
set  $I_t$, but in some cases no pair in $A_t$ produces
the maximum value in $I_t$.
However, the maximum value
produced by a pair in $A_t$ is at least one less than the maximum
value of $I_t$.  Since the set produced by $A_t$ is itself an interval, missing
at most one element (the top) of the interval $I_t$, and
we know by Part I that the top element of $I_t$ is in $D_t$, we conclude that
$\displaystyle D_t = I_t$.
\end{proof}

\section{Regular pentagonal line tilings}
We consider one more particular case, pentagonal line tilings.

\begin{definition}
Let $t$ be a positive integer.
A {\em regular pentagonal line tiling} is a graph
$P_{5,t}$ with vertex set $V$ and edge set $E$ as follows:
\begin{itemize}
\item $V = \{a_i\mid 0\le i\le \lceil 3t/2\rceil \} \cup
          \{b_i\mid 0\le i\le \lfloor 3t/2\rfloor \} $
\item $E = \{a_ia_{i+1}\mid 0\le i\le \lfloor (3t-1)/2\rfloor \} \cup
            \{b_ib_{i+1}\mid 0\le i\le \lfloor (3t-2)/2\rfloor \} \cup
            \{a_{3j}b_{3j} \mid 0\le j\le \lfloor t/2\rfloor \} \cup
            \{a_{3j+2}b_{3j+1} \mid 0\le j\le \lfloor (t-1)/2\rfloor \}$
\end{itemize}
\end{definition}

See Figure~\ref{P54}.

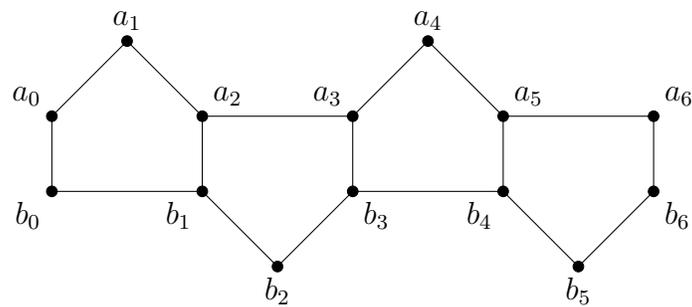
\begin{figure}
\begin{center}
\begin{tikzpicture}
\draw (0,2) node[above left] {$a_0$}--
      (1,3) node[above] {$a_1$}--
      (2,2) node[above right] {$a_2$}--
      (4,2) node[above left] {$a_3$}--
      (5,3) node[above] {$a_4$}--
      (6,2) node[above right] {$a_5$}--
      (8,2) node[above right] {$a_6$}--
      (8,1) node[below right] {$b_6$}--
      (7,0) node[below] {$b_5$}--
      (6,1) node[below left] {$b_4$}--
      (4,1) node[below right] {$b_3$}--
      (3,0) node[below] {$b_2$}--
      (2,1) node[below left] {$b_1$}--
      (0,1) node[below left] {$b_0$}--cycle;
\draw (2,1)--(2,2);
\draw (4,1)--(4,2);
\draw (6,1)--(6,2);
\filldraw[black] (0,1) circle (2pt);
\filldraw[black] (0,2) circle (2pt);
\filldraw[black] (1,3) circle (2pt);
\filldraw[black] (2,1) circle (2pt);
\filldraw[black] (2,2) circle (2pt);
\filldraw[black] (3,0) circle (2pt);
\filldraw[black] (4,1) circle (2pt);
\filldraw[black] (4,2) circle (2pt);
\filldraw[black] (5,3) circle (2pt);
\filldraw[black] (6,1) circle (2pt);
\filldraw[black] (6,2) circle (2pt);
\filldraw[black] (7,0) circle (2pt);
\filldraw[black] (8,1) circle (2pt);
\filldraw[black] (8,2) circle (2pt);
\end{tikzpicture}
\end{center}\caption{Graph $P_{5,4}$}
\label{P54}
\end{figure}

Let $(F_n)$ be the standard Fibonacci sequence, $F_1=F_2=1$, $F_n=F_{n-1}+F_{n-2}$ for $n\ge 3$.

\begin{theorem}\label{thm:5_gon}
Let $P_{5,t}$ be the pentagonal line tiling with $t\ge 1$.  Then
 $\displaystyle \match(P_{5,t})\simeq \bigvee_{F_{t+2}-1} S^t$. 
\end{theorem}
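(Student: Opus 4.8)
The plan is to induct on $t$, using the cofiber-sequence and neighborhood tools from Section~2 to reduce $\match(P_{5,t})$ to combinations of $\match(P_{5,t-1})$ and $\match(P_{5,t-2})$, and then to recognize the Fibonacci recursion $F_{t+2}-1 = (F_{t+1}-1) + (F_t-1) + 1$. For the base cases $t=1$ (a single pentagon) and $t=2$, I would compute $\match(P_{5,1})$ directly: the matching complex of $C_5$ is $C_5$ itself, a $1$-sphere, and $F_3-1 = 1$, so this checks out; similarly $\match(P_{5,2})$ should be a wedge of $F_4-1 = 2$ copies of $S^2$, which can be verified by a short direct reduction or by the inductive step itself once the machinery is set up. I would place the detailed base-case verifications in the Appendix.

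For the inductive step, fix $t\ge 3$ and look at the last pentagon of $P_{5,t}$, which is attached to the previous one along the edge $a_{n-1}b_{n-1}$ (in the notation of the general definition). The idea is to peel off this last pentagon. First I would locate an edge $e$ in the final pentagon (one of the three edges not shared with the previous cycle and not equal to $a_nb_n$, i.e.\ an edge adjacent to the glued edge) whose closed edge-neighborhood is contained in that of the glued edge $h = a_{n-1}b_{n-1}$; applying Proposition~\ref{prop:closed_neigh} splits $\match(P_{5,t})$ as $\match(P_{5,t}\setminus\{h\}) \vee \Sigma\match(P_{5,t}\setminus EN[h])$. The graph $P_{5,t}\setminus\{h\}$ is $P_{5,t-1}$ with a path of length four (the opened-up pentagon minus an edge) attached; by Proposition~\ref{prop:shorten_path} (contracting the degree-two path) or by Corollary~\ref{path_length_3} combined with Proposition~\ref{double_edge}, this contributes a suspension of $\match(P_{5,t-1})$ together with a $\Sigma^2$-shifted copy of $\match(P_{5,t-2})$ coming from the parallel-edge term, exactly paralleling the triangular-tiling computation in the proof of Theorem~\ref{3-gonal_tiling}. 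The term $\match(P_{5,t}\setminus EN[h])$ should reduce (after deleting the neighborhood of the glued edge, the last pentagon collapses to a short path hanging off $P_{5,t-2}$) to a suspension of $\match(P_{5,t-2})$. Assembling the pieces, I expect a homotopy equivalence of the shape
\[
\match(P_{5,t}) \simeq \Sigma\match(P_{5,t-1}) \vee \Sigma^2\match(P_{5,t-2}) \vee S^t,
\]
where the extra $S^t$ is the parallel-edge contribution. By the induction hypothesis the right side is $\bigvee_{F_{t+1}-1}\Sigma S^{t-1} \vee \bigvee_{F_t-1}\Sigma^2 S^{t-2} \vee S^t = \bigvee_{(F_{t+1}-1)+(F_t-1)+1} S^t = \bigvee_{F_{t+2}-1} S^t$, which is the claim. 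A key bookkeeping point is that every term lands in dimension exactly $t$ (no lower-dimensional spheres survive), which forces the base cases to be purely spherical and makes the Fibonacci count come out cleanly.

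The main obstacle I anticipate is getting the reductions on $P_{5,t}\setminus\{h\}$ and $P_{5,t}\setminus EN[h]$ exactly right: a pentagon, unlike a square, opens into a path of length four rather than three, so one must be careful about whether contracting that path (Proposition~\ref{prop:shorten_path}) produces a clean copy of $P_{5,t-1}$ or a copy with an extra parallel edge, and about how $EN[h]$ removal interacts with the neighboring pentagon's shared edge. I would handle this by drawing the figure for small $t$ (analogous to Figure~\ref{reduction}), identifying precisely which vertices become degree two, and tracking the parallel-edge terms via Proposition~\ref{double_edge} as in Theorem~\ref{3-gonal_tiling}; I would also double-check that the inclusion hypothesis for Proposition~\ref{prop:closed_neigh} genuinely holds for the chosen $e$ and $h$ (it does, since in a pentagon glued along $h$ the edge adjacent to $h$ sees only $h$'s endpoints plus one further vertex, all of which lie in $EN[h]$). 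Once the recursion above is established, the Fibonacci identity finishes the proof immediately.
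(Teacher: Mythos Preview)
Your first step does not go through: for the glued edge $h$ and an adjacent edge $e$ of the final pentagon, the inclusion $EN[e]\subset EN[h]$ fails. Write the final pentagon as $p_1p_2p_3p_4p_5$ with $h=p_1p_5$ and $e=p_1p_2$. The vertex $p_2$ has degree~$2$, and the edge $p_2p_3$ lies in $EN[e]$; but $p_2p_3$ shares no vertex with $h$, so $p_2p_3\notin EN[h]$. (Your parenthetical justification conflates vertices and edges: the ``one further vertex'' $p_2$ contributes an edge $p_2p_3$ to $EN[e]$ that $h$ does not see.) Thus Proposition~\ref{prop:closed_neigh} is not available with this choice, and the splitting $\match(P_{5,t})\simeq\match(P_{5,t}\setminus\{h\})\vee\Sigma\match(P_{5,t}\setminus EN[h])$ is unjustified.

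More seriously, even the correct route does not produce the pieces you expect. If one contracts the length-$4$ path in the outer pentagon via Proposition~\ref{prop:shorten_path} (this step \emph{does} work, since those internal vertices have degree~$2$), one gets $\Sigma\match(G_{t-1}')$ where $G_{t-1}'$ is $P_{5,t-1}$ with a doubled outer edge $v$; Proposition~\ref{double_edge} then gives $\match(P_{5,t-1})\vee\Sigma\match(P_{5,t-1}\setminus EN[v])$. But $P_{5,t-1}\setminus EN[v]$ is \emph{not} $P_{5,t-2}$: deleting the closed neighborhood of the outer vertical edge of $P_{5,t-1}$ leaves $P_{5,t-2}$ with a single pendant edge still attached. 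Call this graph $H_{t-2}$. One computes $\match(H_{t-2})\simeq\bigvee_{F_t}S^{t-2}$, not $\bigvee_{F_t-1}S^{t-2}$, so your ``extra $S^t$'' is a symptom of this mismatch rather than a term you can point to in the decomposition. The paper resolves this by running a separate induction on the auxiliary family $H_t$ (the pendant edge is simplicial, so Proposition~\ref{prop:simplicial_edge} yields $\match(H_t)\simeq\Sigma\match(H_{t-1})\vee\Sigma^2\match(H_{t-2})$), and only then assembling $\match(P_{5,t})\simeq\Sigma\match(P_{5,t-1})\vee\Sigma^2\match(H_{t-2})$. Your proposal is missing this auxiliary sequence.
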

\begin{proof}
The proof is by induction, and we work with two sequences of graphs.  Let $G_t=P_{5,t}$.
Let $H_t$ be the graph obtained by appending one edge to the graph $G_t$, as shown in
Figure~\ref{pentagons-H}.
  We will use Proposition~\ref{prop:shorten_path} and Proposition~\ref{double_edge} to reduce the matching complex of $G_t$ to
the wedge of suspensions of the matching complexes of $G_{t-1}$ and $H_{t-2}$.

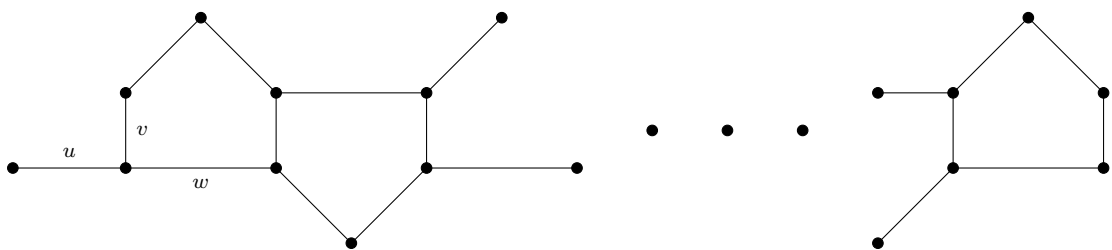
\begin{figure}
\begin{center}
\begin{tikzpicture}
\draw (-1.5,1)--(0,1) node[draw=none,fill=none,font=\scriptsize,midway,above] {$u$};
\draw (0,2)--(1,3)--(2,2)--(2,1);
\draw (0,1)--(2,1) node[draw=none,fill=none,font=\scriptsize,midway,below] {$w$};
\draw (0,1)--(0,2) node[draw=none,fill=none,font=\scriptsize,midway,right] {$v$};
\draw (2,2)--(4,2)--(4,1)--(3,0)--(2,1);
\draw (4,2)--(5,3);
\draw (4,1)--(6,1);
\draw (10,2)--(11,2);
\draw (10,0)--(11,1);
\draw (11,2)--(12,3)--(13,2)--(13,1)--(11,1)--(11,2);
\filldraw[black] (-1.5,1) circle (2pt);
\filldraw[black] (0,1) circle (2pt);
\filldraw[black] (0,2) circle (2pt);
\filldraw[black] (1,3) circle (2pt);
\filldraw[black] (2,1) circle (2pt);
\filldraw[black] (2,2) circle (2pt);
\filldraw[black] (3,0) circle (2pt);
\filldraw[black] (4,1) circle (2pt);
\filldraw[black] (4,2) circle (2pt);
\filldraw[black] (5,3) circle (2pt);
\filldraw[black] (6,1) circle (2pt);
\filldraw[black] (7,1.5) circle (2pt);
\filldraw[black] (8,1.5) circle (2pt);
\filldraw[black] (9,1.5) circle (2pt);
\filldraw[black] (10,0) circle (2pt);
\filldraw[black] (10,2) circle (2pt);
\filldraw[black] (11,1) circle (2pt);
\filldraw[black] (11,2) circle (2pt);
\filldraw[black] (12,3) circle (2pt);
\filldraw[black] (13,1) circle (2pt);
\filldraw[black] (13,2) circle (2pt);
\end{tikzpicture}
\end{center}\caption{Graph $H_t$}
\label{pentagons-H}
\end{figure}

We first find the homotopy type of the matching complex of $H_t$.
It is straightforward  to check that
$\displaystyle \match(H_1)\simeq \bigvee_2 S^1 $. 
Let $u$ be the pendant edge and $v$ and $w$ its neighboring edges, as shown in Figure~\ref{pentagons-H}.
The edge $u$ is a simplicial edge, because its two neighbors are neighbors of each other, so
we can apply Proposition~\ref{prop:simplicial_edge}, and conclude that
$$\match(H_t) \simeq \Sigma\match(H_t \setminus EN[v]) \vee \Sigma\match(H_t\setminus EN[w]).$$
The graph $H_t\setminus EN[v]$ is isomorphic to $H_{t-1}$.
The graph $H_t\setminus EN[w]$ is isomorphic to the graph $H_{t-2}$ with an additional
path of length 3 attached to another vertex of the first pentagon.  
(In the case of $t=2$, $H_t\setminus EN[w]$ is a path of length~5.) 
By Corollary~\ref{path_length_3}, this path can be collapsed to give
$\match(H_t\setminus EN[w])\simeq \Sigma\match(H_{t-2})$. Thus, $\match(H_t)\simeq \Sigma\match(H_{t-1}) \vee \Sigma^2\match(H_{t-2})$.
By induction we conclude that
$\displaystyle \match(H_t)\simeq \bigvee_{F_{t+2}} S^t$.

Now consider $G_t=P_{5,t}$. 
Let $G_t'$ be the multigraph obtained from $G_t$ by duplicating
the first ``vertical'' edge $v$ in $G_t$; denote the duplicate edge $x$.
(See Figure~\ref{pentagons-G'}.)
Proposition~\ref{prop:shorten_path} gives
$\match(G_t) = \Sigma\match(G_{t-1}')$. Since $G_{t-1}' = G_{t-1} \cup \{x\}$, by applying Proposition~\ref{double_edge} we obtain:
$$\match(G_{t-1}') \simeq \match(G_{t-1}) \vee \Sigma \match(G_{t-1} \setminus EN_{G_{t-1}}[v]).$$ 
Further, graph $G_{t-1} \setminus EN_{G_{t-1}}[v])$ is isomorphic to $H_{t-2}$, so  

$$\match(G_t)\simeq \Sigma\match(G_{t-1}') \simeq
\Sigma\match(G_{t-1})\vee\Sigma^2\match(H_{t-2}).$$

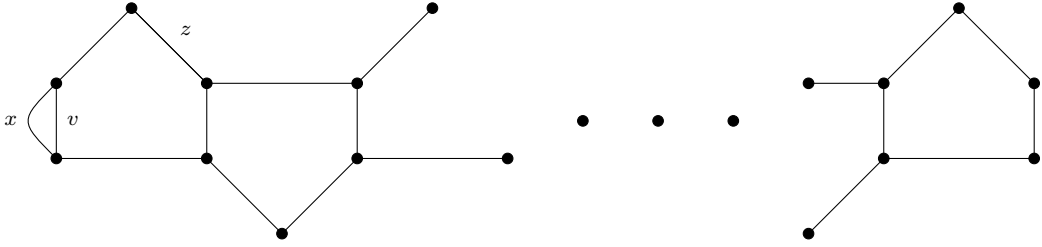
\begin{figure}
\begin{center}
\begin{tikzpicture}
\draw (0,2)--(1,3)--(2,2)--(2,1);
\draw (1,3)--(2,2) node[draw=none,fill=none,font=\scriptsize,midway,above right] {$z$};
\draw (0,1)--(2,1); 
\draw (0,1)--(0,2) node[draw=none,fill=none,font=\scriptsize,midway,right] {$v$};
\draw (0,1) .. controls (-.5,1.5) .. (0,2) node[draw=none,fill=none,font=\scriptsize,midway,left] {$x$};
\draw (2,2)--(4,2)--(4,1)--(3,0)--(2,1);
\draw (4,2)--(5,3);
\draw (4,1)--(6,1);
\draw (10,2)--(11,2);
\draw (10,0)--(11,1);
\draw (11,2)--(12,3)--(13,2)--(13,1)--(11,1)--(11,2);

\filldraw[black] (0,1) circle (2pt);
\filldraw[black] (0,2) circle (2pt);
\filldraw[black] (1,3) circle (2pt);
\filldraw[black] (2,1) circle (2pt);
\filldraw[black] (2,2) circle (2pt);
\filldraw[black] (3,0) circle (2pt);
\filldraw[black] (4,1) circle (2pt);
\filldraw[black] (4,2) circle (2pt);
\filldraw[black] (5,3) circle (2pt);
\filldraw[black] (6,1) circle (2pt);
\filldraw[black] (7,1.5) circle (2pt);
\filldraw[black] (8,1.5) circle (2pt);
\filldraw[black] (9,1.5) circle (2pt);
\filldraw[black] (10,0) circle (2pt);
\filldraw[black] (10,2) circle (2pt);
\filldraw[black] (11,1) circle (2pt);
\filldraw[black] (11,2) circle (2pt);
\filldraw[black] (12,3) circle (2pt);
\filldraw[black] (13,1) circle (2pt);
\filldraw[black] (13,2) circle (2pt);
\end{tikzpicture}
\end{center}
\caption{Graph $G_t'$}
\label{pentagons-G'}
\end{figure}
It is straightforward to check that
$\displaystyle \match(G_1)\simeq  S^1 $ and
$\displaystyle \match(G_2)\simeq \bigvee_2 S^2 $.
By induction we see that $\match(G_t)$ is homotopy equivalent to a wedge of $t$-spheres:
$$\match(G_t)
\simeq
\Sigma\match(G_{t-1})\vee\Sigma^2\match(H_{t-2}) \simeq \Sigma(\bigvee_{F_{t+1}-1} S^{t-1})\vee\Sigma^2(\bigvee_{F_{t}} S^{t-2})
\simeq \bigvee_{F_{t+1}-1 +F_t} S^t.$$
So
 $\displaystyle \match(P_{5,t})\simeq \bigvee_{F_{t+2}-1} S^t$.
 \end{proof}

\section{Conclusion}
Our focus in this paper has been on extending the set of graphs whose matching complexes are known to be contractible or homotopy equivalent to a wedge of spheres.  We are interested in larger classes of graphs, particularly of planar graphs, with this property.  We believe that the methods in this paper will be useful in this regard.

\section{Acknowledgments}
This article is based on work supported by the National Science Foundation [Grant No.\ DMS-144014] while the authors participated in the 2020/2021 Summer Research in Mathematics Program of the Mathematical Sciences Research Institute, Berkeley, California.

M.\ Jeli\'{c} Milutinovi\'{c} has been supported by the Science Fund of Serbia
[Project No.\ 7744592 MEGIC
``Integrability and Extremal Problems in Mechanics, Geometry and
Combinatorics''] and 
by the Ministry of
Education, Science, and Technological Development of the Republic of Serbia.
[Grant No.\ 451-03-68/2022-14/200104 at the Faculty of Mathematics, University of Belgrade].

\pagebreak

\appendix \section{Matching complexes for $P_{3,t}$, $1\le t\le 4$}

$P_{3,1}$: $V=\{a_0,a_1,b_0\}$, $E=\{a_0a_1,a_1b_0,a_0b_0\}$

\begin{center}
\begin{tikzpicture}
\draw (0,3) node[above left] {$a_0$}--
      (2,3) node[above] {$a_1$}--
      (0,0) node[below left] {$b_0$}--cycle;
\filldraw[black] (0,0) circle (2pt);
\filldraw[black] (0,3) circle (2pt);
\filldraw[black] (2,3) circle (2pt);

\draw (1,-1) node[below] {$P_{3,1}$};

\draw (5,1) node[left] {$a_0b_0$};
\filldraw[black] (5,1) circle (2pt);
\draw (6,2.5) node[right] {$a_0a_1$};
\filldraw[black] (6,2.5) circle (2pt);
\draw (6,1) node[right] {$a_1b_0$};
\filldraw[black] (6,1) circle (2pt);
\draw (5.5,-1) node[below] {$\match(P_{3,1})$};
\end{tikzpicture}
\end{center}

$P_{3,2}$: $V=\{a_0,a_1,b_0,b_1\}$, $E=\{a_0a_1,b_0b_1,a_1b_0,a_0b_0,a_1b_1\}$ 

\begin{center}
\begin{tikzpicture}
\draw (0,3) node[above left] {$a_0$}--
      (2,3) node[above] {$a_1$}--
      (0,0) node[below left] {$b_0$}--cycle;
\draw (0,0)--(2,0) node[below right] {$b_1$}--(2,3);
\filldraw[black] (0,0) circle (2pt);
\filldraw[black] (0,3) circle (2pt);
\filldraw[black] (2,3) circle (2pt);
\filldraw[black] (2,0) circle (2pt);

\draw (1,-1) node[below] {$P_{3,2}$};

\draw (5,2) node[above] {$a_0b_0$};
\filldraw[black] (5,2) circle (2pt);
\draw (7,2) node[above] {$a_1b_1$};
\filldraw[black] (7,2) circle (2pt);
\draw (6,1) node[below] {$a_1b_0$};
\filldraw[black] (6,1) circle (2pt);
\draw (8,2.5) node[right] {$a_0a_1$};
\filldraw[black] (8,2.5) circle (2pt);
\draw (8,0.5) node[right] {$b_0b_1$};
\filldraw[black] (8,0.5) circle (2pt);
\draw (7,-1) node[below] {$\match(P_{3,2})$};
\draw (5,2)--(7,2);
\draw (8,2.5)--(8,0.5);
\end{tikzpicture}
\end{center}

$P_{3,3}$: $V=\{a_0,a_1,a_2,b_0,b_1\}$, $E=\{a_0a_1,a_1a_2,b_0b_1,a_1b_0,a_2b_1,a_0b_0,a_1b_1\}$

\begin{center}
\begin{tikzpicture}
\draw (0,3) node[above left] {$a_0$}--
      (2,3) node[above] {$a_1$}--
      (0,0) node[below left] {$b_0$}--cycle;
\draw (2,3)--(4,3) node[above] {$a_2$}--(2,0);
\draw (0,0)--(2,0) node[below right] {$b_1$}--(2,3);
\filldraw[black] (0,0) circle (2pt);
\filldraw[black] (0,3) circle (2pt);
\filldraw[black] (2,3) circle (2pt);
\filldraw[black] (2,0) circle (2pt);
\filldraw[black] (4,3) circle (2pt);

\draw (1,-1) node[below] {$P_{3,3}$};

\draw (5,0) node[below] {$a_1b_1$}--
      (6,1) node[below] {$a_0b_0$}--
      (6,2) node[left] {$a_1a_2$}--
      (7,3) node[left] {$b_0b_1$}--
      (8,2) node[right] {$a_0a_1$}--
      (8,1) node[below] {$a_2b_1$}--
      (9,0) node[below] {$a_1b_0$};
\draw (6,1)--(8,1);
\filldraw[black] (5,0) circle (2pt);
\filldraw[black] (6,1) circle (2pt);
\filldraw[black] (6,2) circle (2pt);
\filldraw[black] (7,3) circle (2pt);
\filldraw[black] (8,1) circle (2pt);
\filldraw[black] (8,2) circle (2pt);
\filldraw[black] (9,0) circle (2pt);
\draw (7,-1) node[below] {$\match(P_{3,3})$};
\end{tikzpicture}
\end{center}

$P_{3,4}$: $V=\{a_0,a_1,a_2,b_0,b_1,b_2\}$, 
$E=\{a_0a_1,a_1a_2,b_0b_1,b_1b_2,a_1b_0,a_2b_1,a_0b_0,a_1b_1,a_2b_2\}$, 

\begin{center}
\begin{tikzpicture}
\draw (0,3) node[above left] {$a_0$}--
      (2,3) node[above] {$a_1$}--
      (0,0) node[below left] {$b_0$}--cycle;
\draw (2,3)--(4,3) node[above] {$a_2$}--(2,0);
\draw (0,0)--(2,0) node[below] {$b_1$}--(2,3);
\draw (4,3)--(4,0) node[below right] {$b_2$}--(2,0);
\filldraw[black] (0,0) circle (2pt);
\filldraw[black] (0,3) circle (2pt);
\filldraw[black] (2,3) circle (2pt);
\filldraw[black] (2,0) circle (2pt);
\filldraw[black] (4,3) circle (2pt);
\filldraw[black] (4,0) circle (2pt);

\draw (1,-1) node[below] {$P_{3,4}$};
\end{tikzpicture}

\scalebox{2}{
\begin{tikzpicture}

\node[scale=.7,below left] at (6,1) {$a_0b_0$};
\node[scale=.7,left] at (6,2) {$a_1a_2$};
\node[scale=.7,above] at (7,3) {$b_0b_1$};
\node[scale=.7,right] at (8,2) {$a_0a_1$};
\node[scale=.7,below right] at (8,1) {$a_2b_1$};
\node[scale=.5,right] at (7.4,1.6) {$a_1b_0$};
\node[scale=.5,below] at (7,1.5) {$a_1b_1$};
\node[scale=.5,above] at (6.8,2.2) {$b_1b_2$};
\node[scale=.5,below right] at (7.2,2) {$a_2b_2$};
\draw (7,1.5)--(6,1)--(6,2)--(7,3)--(8,2)--(8,1)--(7.4,1.6);
\draw (6,1)--(8,1);
\draw (7.4,1.6)--(7.2,2);
\draw (6,1)--(7.2,2);
\draw (8,1)--(7.4,1.6)--(6.8,2.2);
\draw (7,3)--(7.2,2.0)--(7,1.5); 
\draw (6,1)--(6.8,2.2); 
\draw (8,2)--(7.2,2);

\shadedraw (8,2)--(7,3)--(7.2,2)--(8,2);
\shadedraw (6,1)--(6.8,2.2)--(6,2)--(6,1);
\shadedraw (6,1)--(7,1.5)--(7.2,2)--(6,1);

\draw (6,2)--(6.8,2.2)--(8,2);
\draw (8,1)--(7.4,1.6)--(6.8,2.2);

\filldraw[black] (7,1.5) circle (2pt);
\filldraw[black] (6,1) circle (2pt);
\filldraw[black] (6,2) circle (2pt);
\filldraw[black] (7,3) circle (2pt);
\filldraw[black] (8,1) circle (2pt);
\filldraw[black] (8,2) circle (2pt);
\filldraw[black] (7.4,1.6) circle (2pt);
\filldraw[black] (7.2,2.0) circle (2pt);
\filldraw[black] (6.8,2.2) circle (2pt);

\node[scale=.5,below] at (7,0) {$\match(P_{3,4})$};
\end{tikzpicture}}
\end{center}

\section{Homotopy types of $\match(P_{3,t})$ for small $t$}

\begin{center}
\begin{tabular}{|r|c||r|c||r|c|} \hline
$t$ & homotopy & $t$ & homotopy & $t$ & homotopy \\
    & type     &     & type     &     & type \\ \hline
2 & $\bigvee\limits_{2} S^0$ &
6 & $\bigvee\limits_{4} S^2$  &
10 & $\bigvee\limits_{28} S^3$ \\ \hline
3 & $S^1$ &
7 & $\bigvee\limits_{12} S^2$ &
11 & $\bigvee\limits_{16} S^3 \vee \bigvee\limits_{6} S^4$ \\ \hline
4 & $\bigvee\limits_5 S^1$ &
8 & $\bigvee\limits_8 S^2 \vee S^3$ &
12 & $\bigvee\limits_{38} S^4$ \\ \hline
5 & $\bigvee\limits_4 S^1$ &
9 & $\bigvee\limits_{13} S^3$  &
13 & $\bigvee\limits_{64} S^4 \vee S^5$ \\ \hline
\end{tabular}

\end{center}

\end{document}